\newtheorem{definition}{Definition}
\newtheorem{proposition}{Proposition}
\newtheorem{property}{Property}
\begin{document}
	\begin{frontmatter}
		\title{Designing optimal subsidy schemes and recycling plans for sustainable treatment of construction and demolition waste}
		\date{}

		\author[a]{Lei Yu}
		\author[a]{Qian Ge\footnote{Corresponding author. Email: geqian@swjtu.edu.cn}}
		\author[b]{Ke Han\footnote{Corresponding author. Email: kehan@swjtu.edu.cn}}
		\author[a]{Wen Ji}
		\author[a]{Yueqi Liu}
		
		\affiliation[a]{organization={School of Transportation and Logistics, Southwest Jiaotong University},
			addressline={Xi'an 999, Pidu District}, 
			city={Chengdu},
			postcode={611756}, 
			state={Sichuan},
			country={China}}
		\affiliation[b]{organization={School of Economics and Management, Southwest Jiaotong University},
			addressline={No. 111, North Section 1, 2nd Ring Road}, 
			city={Chengdu},
			postcode={610031}, 
			state={Sichuan},
			country={China}}
		\begin{abstract}
			More than 10 billion tons of construction and demolition waste (CW) are generated globally each year, exerting a significant impact on the environment. In the CW recycling process, the government and the carrier are the two primary stakeholders. The carrier is responsible for transporting CW from production sites to backfill sites or processing facilities, with a primary focus on transport efficiency and revenue. Meanwhile, the government aims to minimize pollution from the recycling system, which is influenced by transport modes, shipment distances, and the processing methods used for CW. This paper develops a bi-objective, bi-level optimization model to address these challenges. The upper-level model is a linear programming model that optimizes the government's subsidy scheme, while the lower-level model is a minimum-cost flow model that optimizes the carrier's recycling plan. A hybrid heuristic solution method is proposed to tackle the problem's complexity. A case study in Chengdu, China, demonstrates the computational efficiency of the model and its small solution gap. With an optimized subsidy scheme and recycling plan, pollution can be reduced by over 29.29\% through a relatively small investment in subsidies.
		\end{abstract}
		
		\begin{keyword}
			Construction and demolition waste; Sustainable treatment; Minimum cost flow problem; Bi-level programming; Hybrid algorithm
		\end{keyword}
	\end{frontmatter}
	
	\section{Introduction}
	The global urbanization and urban renewal are leading to a significant increase in construction projects. With this comes a large amount of construction and demolition waste (CW), more than 10 billion tons of CW are generated globally annually~\citep{yazdani2021improving}, making it a major component of municipal solid waste. It accounts for 25-40\% in developing and developed countries~\citep{lin2020towards}. As the largest contributor, China generates more than 3 billion tons of CW annually, the United States generates more than 600 million tons annually, and the European Union generated 372 million tons in 2018~\citep{huang2018construction,zheng2024optimal}. Despite the enormous number, the recycling rate of the CW is quite small compared with other types of urban waste. In China, only 5\% of CW are recycled~\citep{huang2018construction}. Nonetheless, improper treatment can cause serious environmental hazards such as soil contamination and degradation, air and water pollution, greenhouse gas emissions, global warming, and severe health problems~\citep{rathore2020economic}. This highlights the need for an efficient, environmentally friendly CW recycling plan.
	
	Three types of sites are integral to the recycling process of CW: production sites, backfill sites, and processing facilities~\citep{chu2012optimization,yu2024using,yazdani2021improving}. The contractor generates waste at production sites and then commissions carrier to transport it and pays accordingly. The carrier's recycling plan significantly impacts the environment, influenced by transport modes and recycling methods. Diesel trucks are cheaper but more polluting, while electric trucks are cleaner but more costly, making carriers favor diesel trucks without government subsidies. Recycling methods also vary in environmental impact; some waste can be sent directly to backfill sites, while others require harmless treatment at facilities before recycling~\citep{chu2012optimization}. The processing facilities differ in pollutant outputs due to technological limitations. Carriers often prioritize convenience by selecting the nearest facilities, regardless of environmental concerns. 
	
	The core of the recycling plan lies in scheduling the movement of trucks among three types of sites. To maximize profit, carriers must decide on transport modes, recycling methods, and truck routes from production sites to backfill or processing facilities. However, these plans are often created manually based on experience, leading to two major drawbacks: (1) Due to the complexity of CW recycling and numerous influencing factors, manual scheduling often results in feasible but inefficient plans~\citep{chu2012optimization}; (2) The government’s goal of minimizing environmental pollution conflicts with the carrier’s objective of maximizing profit, leading to schedules that may not align with environmental priorities. Although the government cannot directly control carriers' recycling plans, it can indirectly influence them by adjusting subsidies and treatment fees at processing facilities to promote sustainability~\citep{chen2024construction}. The government's subsidy scheme interact with the carrier's recycling plan, as the two decisions are closely interdependent. The need to design an optimal subsidy scheme further complicates the problem.
	
	This study aims to address these challenges by developing an efficient and environmentally friendly scheduling method that incorporates the government's indirect influence through subsidies. Penalties are not considered, as they can lead to irregularities, potentially worsening safety and pollution issues~\citep{belien2014municipal}.
	
	The main contributions of this paper are as follows.
	
	\begin{itemize}
		\item[1)]  We formulate a tailored multi-vehicle minimum-cost flow model on a time-space network to optimize the carrier's recycling plan and maximize profit. The model improves solution efficiency by representing site congestion through service arcs, effectively modeling queues. Results from a large instance show that the model can be solved by commercial solver within a short time.
		
		\item[2)]  We formulate a bi-level optimization model to reduce pollution through a subsidy scheme, with the government as the leader and the carrier as the follower. The lower-level model is a multi-vehicle minimum-cost flow model, while the upper-level model minimizes environmental pollution by optimizing treatment fees across different facilities and truck types. 
		
		\item[3)] We develop an effective hybrid heuristic algorithm to solve the complex bi-level optimization model. The algorithm employs multi-objective particle swarm optimization (MOPSO) to explore solutions for the upper-level problem, while iteratively solving the lower-level problem to refine the best local optimum. It achieves a high-quality solution with a 1.51\% gap in a reasonable time of 3.76 hours.
	\end{itemize}
	
	\section{Literature review}\label{Sec:Lr}
	
	Dealing with CW is a global problem that requires a concerted effort from multiple entities. Research can be broadly categorized into technological and managerial methods~\citep{chen2024construction}. New technologies include introducing electrical trucks and environmentally friendly construction materials~\citep{pena2024sustainable}, etc. Section~\ref{Sec:lr_recycling} reviews the CW recycling process from a management perspective. Section~\ref{Sec:lr_subsidy} reviews studies that impose economic levers on CW management and presents research gaps.
	
	\subsection{CW management scheme}\label{Sec:lr_recycling}
	
	The framework of CW management can be divided into four stages: pre-construction, construction, transportation, and disposal\citep{galvez2018construction}. At the pre-construction stage, carriers need to prepare plans under the supervision of the government, such as estimating waste production, evaluating environmental impacts, and developing operational plans as well as economic drivers such as subsidies, fines, and taxes. During construction, the main considerations are reducing waste, reusing materials, storage, and sorting. The transportation process is quite important yet difficult to schedule. This stage is concerned with transportation efficiency, traffic safety, environmental pollution, etc. The disposal stage is mainly responsible for the harmless treatment of CW. The authorities set strict regulations to avoid illegal disposal~\citep{chen2024construction}.
	
	CW production estimation is widely studied in construction waste management and can be categorized into three methods: site visits (SV), generation rate calculation (GRC), and classification system accumulation (CSA)~\citep{lu2017estimating}. The choice of method depends on the specific objectives and conditions. SV involves field surveys, which are realistic but costly and difficult to replicate. GRC, the most common method, estimates CW based on waste generation rates for specific activities or companies. It is simple and cost-effective, but less accurate. CSA is more detailed, quantifying different types of CW, and offers more reliable data at a lower cost. Studies like \cite{solis2009spanish} and \cite{llatas2011model} applied CSA for more precise estimation, though some models are region-specific. \cite{guerra20204d} combined 4D-BIM for estimating concrete and waste. CSA provides effective, low-cost information for various CW types, forming a solid foundation for optimizing construction processes and waste management.
	
	Transportation of construction and demolition waste is one of the most important and expensive processes in groundwork due to the high volume of waste, time-sensitive production rate and processing cost, and demand for heavy trucks~\citep{aringhieri2018special}. Heavy trucks will affect the traffic operation of ordinary vehicles, inducing traffic congestion as well as serious accidents~\citep{esenduran2020choice}. They also account for a large proportion of emissions which therefore poses a great challenge to the carbon neutral and pollution reduction efforts~\citep{wijnsma2023treat}. The complexity of the transportation networks and designs can lead to inefficient transportation. 
	
	Optimization models for waste transportation have been extensively studied in the last decades. Although few studies focus particularly on the transport of construction waste, related waste transport studies are informative.~\cite{shi2020bi} formulated a bi-objective multi-period 0–1 integer programming model for household e-waste facility location, balancing cost and coverage during network expansion, and proposed tailored metaheuristics—especially a local-search-based approach—that demonstrate superior Pareto performance in extensive computational experiments.~\cite{kala2023note} studied solid waste collection and transfer using India as an example. They proposed two distinct vehicle routing algorithms: a simple and fast nearest-neighbor algorithm and a mixed-integer linear programming (MILP), both could reduce the total transportation cost significantly.

	~\cite{yazdani2021improving} proposed a similar heuristic algorithm based on integrated simulation optimization. Travel time was used as an objective function to optimize the route of trucks from construction sites to recycling facilities.~\cite{tirkolaee2020robust} proposed a MILP based on robust optimization techniques with total cost minimization as the objective function. The model was validated for different problems based on real data under deterministic and uncertainty conditions. There are also studies focusing on the pollution cost parameters and budget constraints. \cite{chu2012optimization} built a multi-commodity network flow model for the movements of trucks on the time-space network to optimize the system cost, and designed a heuristic method to solve it. They then performed a sensitivity analysis of fleet size, transportation cost, and scenarios to obtain decision-making recommendations. Most studies have focused on operational efficiency and ignored environmental issues.
	
	\subsection{Economic leverage for CW management scheme}\label{Sec:lr_subsidy}
	
	Economic leverage is a key method to influence carriers in waste transportation~\citep{esenduran2020choice, wiese2002waste}. The EU introduced waste disposal charges in 1999, with fees for inert materials like concrete and tiles set at 53 euros per cubic meter, while hazardous chemicals cost 86 euros~\citep{li2018willingness}. In Hong Kong, a waste disposal charge scheme has been in place since 2006. A study by \cite{hao2008effectiveness} found that the scheme effectively reduced waste production. Research on carriers' willingness to pay for treatment fees shows that while it exceeds current rates, it is still lower than the government's expectations\citep{li2020stakeholders}. ~\cite{chen2024construction} suggested that many charging policies focus more on improving transport efficiency than reducing pollution, and that Hong Kong's program may have limited long-term impact. ~\cite{elshaboury2022construction} noted that policies in China to reduce waste and promote recycling have had little effect.
	
	Compared with evaluation studies of construction waste charging programs, program design has been less studied in academia. In the case of normal operations, the government usually adopts incentives rather than penalties, as penalties can breed violations and lead to more serious pollution and safety problems~\citep{belien2014municipal}. Several studies have examined the impact of subsidies on increasing the use of new energy vehicles and thus reducing pollution~\citep{cheng2022cost}.~\cite{yuan2014system} pointed out that most regions in China have adopted treatment fees based on experience, with very limited effects. This study is the first to use a system dynamics approach to simulate the basic operation and policy analysis to determine an appropriate disposal fee scheme.~\cite{jia2017dynamic} investigated the effects of charging mechanisms on illegal dumping, recycling, and reuse. A system dynamics approach was used to determine a reasonable range of fees.~\cite{wijnsma2023treat} analyzed the effects of domestic and international waste regulations targeting dumping and export, respectively, on firm incentives and compliance. They established a two-tier waste chain, with producers responsible for generating waste and operators responsible for disposing it.~\cite{zheng2024optimal} developed differential game models to analyze the impact of government subsidies and consumer reference green effects on enterprise decisions and recycling rates in the CW market, offering insights for improving recycling efficiency and regulatory strategies.~\cite{xiong2017modelling} developed a game-theoretic model to examine pricing strategies and interactions among self-interested operators in a symbiotic waste management system with material exchanges. Numerical results highlighted that operating cost subsidies are more effective than gate fee controls in promoting new treatment technologies.~\cite{chen2024construction} proposed a methodology for the design of charges for general construction and demolition waste disposal that takes into account the behavior of carriers and the impact of waste transportation. A mixed integer programming model for the design of charging costs was developed to partially modify the current charging scheme to better achieve environmental protection objectives.
	
	In conclusion, the studies mentioned above have different focuses and significantly advance waste reduction and recycling efforts. However, they present several limitations: (1) Few studies simultaneously address the coordination of multiple trucks, multiple sites, and site service capacities, leading to suboptimal solutions; (2) Existing research often overlooks greenhouse gas emissions during transportation, which is significant in practice; (3) Most studies rely on system dynamics approaches to evaluate and determine treatment fee schemes, making it difficult to accurately model interactions between entities in the simulation system. In contrast, bi-level programming can explicitly capture the satisfactory strategies of both the local government and carriers, enabling them to make suitable decisions respectively at equilibrium. To bridge these gaps, we first construct a multi-vehicle minimum-cost flow model based on a time-space network to minimize carrier costs. Subsequently, we develop a bi-level optimization model based on the Stackelberg game framework to optimize treatment fee schemes and provide insights for better addressing environmental protection challenges.
	
	The following texts of this paper are organized as follows. Section~\ref{Sec:Problem_f} introduces the movements of the trucks among different types of sites and formulates the mathematical models. Section~\ref{Sec:Ma} designs a solution algorithm and evaluation metrics. The case study and conclusions are presented in Sections~\ref{Sec:Cs} and~\ref{Sec:Conclusions}, respectively.
	
	\section{Mathematical formulations}\label{Sec:Problem_f}
	
	This section begins by outlining the assumptions and setting of the research problem. Next, the time-space network for truck movements is introduced, which serves as the foundation for the carrier’s problem, i.e., the lower-level model. Finally, the upper-level problem, formulated as a bi-objective model, aims to optimize the government’s strategy by minimizing both pollution and subsidy expenditure. The complete bi-level problem is presented at the end.
	
	\subsection{Problem statement}\label{Sec:time-space}
	
	We develop a bi-objective, bi-level optimization model inspired by the Stackelberg game, where the government acts as the leader and the carrier as the follower. The government aims to minimize pollution and subsidy expenditure, while the carrier focuses on maximizing profit. Since pollution levels are influenced by truck scheduling, the government guides scheduling decisions by adjusting treatment fees. The structure of the bi-level model is illustrated in Fig.~\ref{Fig:structure_bilevel}.
	
	\begin{figure}[!h]
		\centering
		\includegraphics[width=0.8\textwidth]{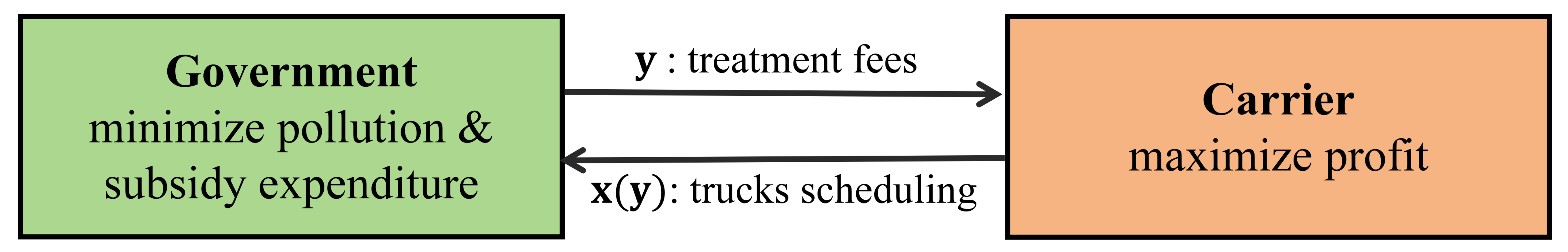}
		\caption{The structure of bi-level model.}\label{Fig:structure_bilevel}
	\end{figure}
	
	We consider both the $E$ electrical fleets $\mathcal{F}_e=\{\mathcal{V}_1,...,\mathcal{V}_E\}$ and $\overline{D}$ diesel fleets $\mathcal{F}_d=\{\mathcal{V}_{E+1},...,\mathcal{V}_{E+\overline{D}}\}$, where the number of truck in each fleet $v\in\mathcal{F}_e\cup\mathcal{F}_d$ is represented $N_v$. 
	
	Due to policy constraints, each truck is operated by a dedicated driver and runs for only one planning period before taking a mandatory rest. Since the battery range of electric trucks exceeds a single planning period, recharging is not a concern. Moreover, as trucks operate in a full-load mode between designated loading and unloading sites, the problem differs substantially from traditional vehicle routing problems in both structure and characteristics.
	
	Consequently, we develop the truck movements across different sites and periods as flows in a time-space network $\mathcal{G}=\{\mathcal{N},\mathcal{A}\}$, where $\mathcal N$ is a set of nodes and $\mathcal{A}$ is a set of directed arcs on the network as shown in Fig.~\ref{Fig:time_space}. Node set includes three types within the system: processing facilities $\mathcal{P}=\{1,..., P\}$, production sites $\mathcal{S}=\{P+1,..., P+S\}$, and backfill sites $\mathcal{D}=\{P+S+1,..., P+S+D\}$, with the total number is $S$, $P$, and $D$ respectively. We consider one depot, represented as the set $\{0\}$. The planning period $\mathcal{T}=\{0,1,...,T\}$, where $T$ represents the number of time intervals, $\Delta t$ represents time interval. To facilitate the mathematical formulation of the model, we define the virtual planning period $\overline{\mathcal{T}}=\{-1,...,-\overline{T}\}$, where $\overline{T}$ is a given integer. Each node $n_{i,t}\in\mathcal{N}$ represents site $i\in\{0\}\cup\mathcal{P}\cup\mathcal{S}\cup\mathcal{D}$ in period $t\in\mathcal{T}\cup\overline{\mathcal{T}}$. We assume that truck take $r_{i,j}$ time intervals to travel from area $i$ to area $j$ for $i,j\in\{0\}\cup\mathcal{P}\cup\mathcal{S}\cup\mathcal{D}$. The flow on the directed arc $(n_{i,t-r_{i,j}},n_{j,t})\in\mathcal{A}$ represents the number of truck moving from node $n_{i,t-r_{i,j}}$ to node $n_{j,t}$ for $i,j\in\{0\}\cup\mathcal{P}\cup\mathcal{S}\cup\mathcal{D}$ and $t\in\mathcal{T}\cup\overline{\mathcal{T}}$. Based on the real-world situation, we categorize the truck movements between two nodes into three categories, i.e., \textit{fully loaded arcs}, \textit{deadheading arcs}, and \textit{service arcs}. They are defined as follows.
	
	\begin{figure}[!t]
		\centering
		\includegraphics[width=0.8\textwidth]{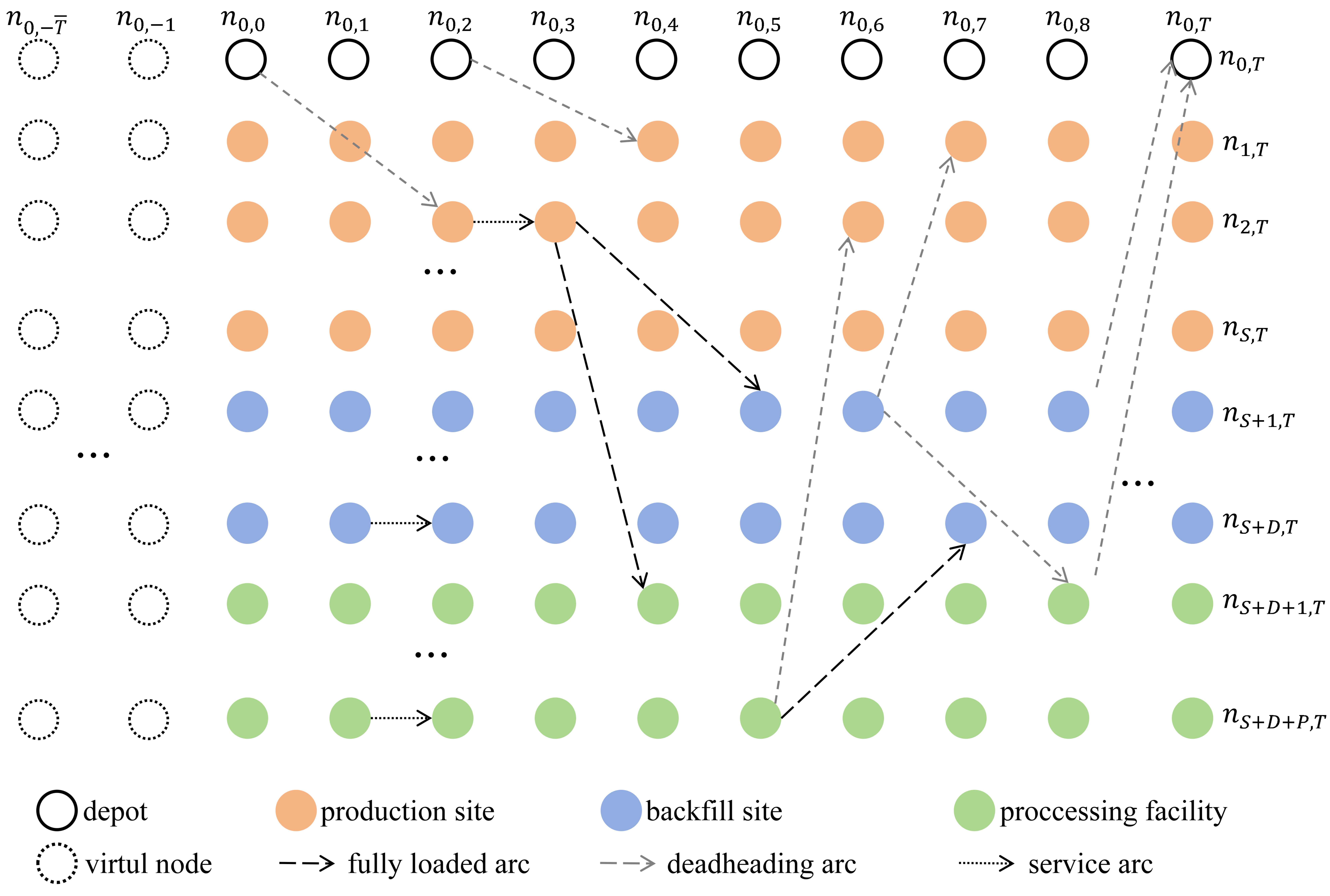}
		\caption{\label{Fig:time_space}Time-space network $\mathcal{G}$.}
	\end{figure}
	
	\begin{definition}[Fully loaded arc]
		A fully loaded arc $(n_{i,t-r_{i,j}},n_{j,t})\in\mathcal{A}^f$ represents the trip that trucks loaded with waste at node $n_{i,t-r_{i,j}}$ move to node $n_{j,t}$ to get unloaded, $t\in\mathcal{T}$, $i,j\in\{0\}\cup\mathcal{P}\cup\mathcal{S}\cup\mathcal{D}$.
	\end{definition}    
	Specifically, fully loaded arcs include three categories depending on the conditions $i, j$: 1) $i\in\mathcal{S}, j\in\mathcal{D}$, trucks transport waste from production sites to backfill sites; 2) $i\in\mathcal{S}, j\in\mathcal{P}$, trucks transport waste from production sites to processing facilities for harmless treatment or storage, etc; 3) $i\in\mathcal{P}, j\in\mathcal{D}$, when waste generated from production sites is not sufficient to meet the needs of backfill sites, trucks will transport waste from processing facilities to backfill sites for pit filling.
	
	\begin{definition}[Deadheading arc]
		A deadheading arc $(n_{i,t-r_{i,j}},n_{j,t})\in\mathcal{A}^d$ represents the trip in which trucks travel from node $n_{i,t-r_{i,j}}$ to node $n_{j,t}$ in empty, $t\in\mathcal{T}$, $i,j\in\{0\}\cup\mathcal{P}\cup\mathcal{S}\cup\mathcal{D}$.
	\end{definition}    
	Specifically, deadheading arcs include four categories depending on the conditions $i, j$: 1) $i\in\{0\}, j\in\mathcal{P}\cup\mathcal{S}$, at the beginning of the planning period, the trucks travel from the depot to production sites or processing facilities to load waste; 2) $i\in\mathcal{P}\cup\mathcal{D}, j\in\{0\}$, at the end of the planning period, trucks travel from backfill sites or processing facilities to the depot for rest; 3) $i\in\mathcal{D}, j\in\mathcal{P}\cup\mathcal{S}$, trucks travel to production sites or processing facilities to load waste after unloading at backfill sites; 4) $i\in\mathcal{P}, j\in\mathcal{S}$, similar to above, trucks travel to production sites to load waste after unloading at processing facilities.
	
	\begin{definition}[Service arc]
		A service arc $(n_{i,t_1},n_{i,t_2})\in\mathcal{A}^s$ represents the waiting phase in which trucks arrive at node $n_{i,t_1}$ and then moves to node $n_{i,t_2}$, $i\in\{0\}\cup\mathcal{P}\cup\mathcal{S}\cup\mathcal{D}$, $t_1,t_2\in\mathcal{T}$. In other words, the trucks stay at site $i$ for one time intervals $\Delta t$.
	\end{definition}
	
	Upon arrival, trucks queue, load (or unload), check, clean, and then depart. The total service time depends on the site’s capacity. For simplicity, we assume the service time is a time interval $\Delta t$, which is typically longer than the actual service time, providing a buffer for travel and delays. Similar assumptions are used in transportation planning~\citep{carey2003pseudo}. Thus, the service arc $(n_{i,t-1}, n_{i,t}) \in \mathcal{A}^s$ represents the waiting phase where trucks arrive at node $n_{i,t-1}$, stay for $\Delta t$, and then depart from node $n_{i,t}$, with $i \in \mathcal{P} \cup \mathcal{S} \cup \mathcal{D}$ and $t \in \mathcal{T}$.
	
	Except for the three types of arcs mentioned, all other arcs in $\mathcal{A}$ have a flow of 0, represented as the set $\mathcal{A}^0$. We define $\mathcal{I} = \{f, d, s, 0\}$, so that $\mathcal{A} = \cup_{\forall e \in \mathcal{I}} \mathcal{A}^e$ and $\mathcal{A}^{e_1} \cap \mathcal{A}^{e_2} = \emptyset$ for $e_1 \neq e_2, e_1, e_2 \in \mathcal{I}$. We then formulate an integer programming problem based on the time-space network in Fig.~\ref{Fig:time_space} to maximize the carrier's profit.
	
	\subsection{The strategy of the carrier}\label{Sec:mathematical_m}
	
	In this section, we formulate a model based on the time-space network $\mathcal{G}$ to maximize the carrier's profit (or equivalently, minimize the negative of the carrier's profit), with three components: 1) Fixed cost, where $C_{0,v}$ is the fixed cost per truck in fleet $v \in \mathcal{F}_e \cup \mathcal{F}_d$, covering the driver's salary, truck maintenance, etc.~\citep{belien2014municipal}. 2) Travel cost, where $C_{1,v}$ is the average cost per truck in fleet $v$ for traveling one time interval $\Delta t$, mainly covering fuel or electricity costs. 3) Revenue from waste transport, with a unit price of $C_2$ CNY per tonne. However, if the waste is sent to processing facilities, the carrier incurs an additional treatment fee, which is either a fixed market price $y^{\prime}$ or a variable fee, discussed in Section~\ref{Sec:Mathematical_BI}. The task volume for each planning period is known in advance, and CW accumulation costs on-site are excluded.
	
	We define decision variable $x_{i,j,v,t}$ as the flow of fleet $v\in\mathcal{F}_e\cup\mathcal{F}_d$ from node $n_{i,t}\in\mathcal{N}$ to another node $n_{j,t+r_{i,j}}\in\mathcal{N}$ at time $t\in\mathcal{T}\cup\overline{\mathcal{T}}$. All of the notations are shown in~\ref{Appendix:Notation}, and the mathematical model is as follows:
	
	\begin{align}
		&\text{\bf{[M1]}}\notag\\ &\text{Min }
		\begin{aligned}[t]
			f(\mathbf{x})=& \overbrace{\sum_{v\in\mathcal{F}_e\cup\mathcal{F}_d}\sum_{j\in\mathcal{P}\cup\mathcal{S}\cup \mathcal{D}}\sum_{t\in\mathcal{T}}C_{0,v}x_{0,j,v,t}}^\text{fixed cost} \\
			& +\overbrace{\sum_{v\in\mathcal{F}_e\cup\mathcal{F}_d}\sum_{i\in\mathcal{P}\cup\mathcal{S}\cup\mathcal{D}}\sum_{j\in\mathcal{P}\cup\mathcal{S}\cup\mathcal{D}}\sum_{t\in\mathcal{T}}C_{1,v}r_{i,j}x_{i,j,v,t}}^\text{travel cost} \\
			& -\overbrace{(\sum_{v\in\mathcal{F}_e\cup\mathcal{F}_d}\sum_{i\in S\cup\mathcal{P}}\sum_{j\in\mathcal{D}}\sum_{t\in\mathcal{T}}C_2Q_vx_{i,j,v,t}+\sum_{v\in\mathcal{F}_e\cup\mathcal{F}_d}\sum_{i\in\mathcal{S}}\sum_{j\in\mathcal{P}}\sum_{t\in\mathcal{T}}(C_2-y^{\prime})Q_vx_{i,j,v,t})/1,000}^\text{revenue from transporting CW};
		\end{aligned}\label{M1:obj} \\
		&\text{Subject to:} \notag \\
		& \qquad \sum_{j\in\mathcal{P}\cup\mathcal{S}\cup\mathcal{D}}\sum_{t\in\mathcal{T}}x_{0,j,v,t}\leq N_v,\forall v\in\mathcal{F}_e\cup\mathcal{F}_d; \label{M1:start}\\
		& \qquad \sum_{j\in\mathcal{P}\cup\mathcal{S}\cup\mathcal{D}}\sum_{t\in\mathcal{T}}x_{0,j,v,t}-\sum_{i\in\mathcal{P}\cup\mathcal{S}\cup\mathcal{D}}\sum_{t\in\mathcal{T}}x_{i,0,v,t}=0,\forall v\in\mathcal{F}_e\cup\mathcal{F}_d; \label{M1:end}\\
		& \qquad \sum_{i\in\{0\}\cup \mathcal{P}\cup \mathcal{S}\cup \mathcal{D}}x_{i,j,v,t-r_{i,j}-1}-\sum_{i\in\{0\}\cup \mathcal{P}\cup \mathcal{S}\cup \mathcal{D}}x_{j,i,v,t}=0,\forall j\in\mathcal{P}\cup\mathcal{S}\cup\mathcal{D},\forall v\in\mathcal{F}_e\cup\mathcal{F}_d,\forall t\in\mathcal{T}; \label{M1:flow_E}\\
		& \qquad \sum_{i\in\{0\}\cup\mathcal{P}\cup\mathcal{S}\cup\mathcal{D}}\sum_{v\in\mathcal{F}_e\cup\mathcal{F}_d}x_{i,j,v,t-r_{i,j}}\leq B_j,\forall j\in\mathcal{P}\cup\mathcal{S}\cup\mathcal{D},\forall t\in\mathcal{T}; \label{M1:restrict_B}\\
		& \qquad \sum_{j\in\mathcal{P}\cup\mathcal{D}}\sum_{v\in\mathcal{F}_e\cup\mathcal{F}_d}\sum_{t\in\mathcal{T}}Q_vx_{i,j,v,t}/1,000\geq qs_i,\forall i\in\mathcal{S}; \label{M1:qs_L}\\
		& \qquad \sum_{j\in\mathcal{P}\cup\mathcal{D}}\sum_{v\in\mathcal{F}_e\cup\mathcal{F}_d}\sum_{t\in\mathcal{T}}Q_vx_{i,j,v,t}/1,000< (1+\epsilon_2)qs_i,\forall i\in\mathcal{S}; \label{M1:qs_U}\\
		& \qquad \sum_{i\in\mathcal{P}\cup\mathcal{S}}\sum_{v\in\mathcal{F}_e\cup\mathcal{F}_d}\sum_{t\in\mathcal{T}}Q_vx_{i,j,v,t}/1,000\geq qd_j,\forall j\in\mathcal{D}; \label{M1:qd_L}\\
		& \qquad \sum_{i\in\mathcal{P}\cup\mathcal{S}}\sum_{v\in\mathcal{F}_e\cup\mathcal{F}_d}\sum_{t\in\mathcal{T}}Q_vx_{i,j,v,t}/1,000< (1+\epsilon_2)qd_j,\forall j\in\mathcal{D}; \label{M1:qd_U}\\
		& \qquad x_{i,j,v,t}=0,\forall i\in\{0\}\cup\mathcal{P}\cup\mathcal{S}\cup\mathcal{D},\forall j\in\{0\}\cup\mathcal{P}\cup\mathcal{S}\cup\mathcal{D},\forall v\in\mathcal{F}_e\cup\mathcal{F}_d,\forall t\in\overline{\mathcal{T}}; \label{M1:zero1}\\
		& \qquad x_{i,j,v,t}=0, \exists\mathcal{H}\in\{\{0\},\mathcal{P},\mathcal{S},\mathcal{D}\},\forall i,j\in \mathcal{H},\forall v\in\mathcal{F}_e\cup\mathcal{F}_d,\forall t\in\mathcal{T}; \label{M1:zero2}\\
		& \qquad x_{i,j,v,t}\in\{0,1,...,B_j\},\forall i,j\in\{0\}\cup\mathcal{P}\cup S\cup D,\forall v\in\mathcal{F}_e\cup\mathcal{F}_d,\forall t\in \mathcal{T}\cup\overline{\mathcal{T}}.\label{M1:zero3}
	\end{align}
	
	Constraint~\eqref{M1:start} ensures that the total number of trucks traveling from the depot to the sites is less than the total number of trucks in the fleet. Constraint~\eqref{M1:end} ensures that the number of trucks traveling from the depot equals the number of trucks returning to the depot. Constraint~\eqref{M1:flow_E} indicates that the node flow is conserved. Constraint~\eqref{M1:restrict_B} restricts the maximum number of trucks arriving at a site simultaneously to avoid congestion. Constraints~\eqref{M1:qs_L} and~\eqref{M1:qs_U} ensure that the CW generated from each production site is transported, where $\epsilon_2$ is a smaller number. Constraints~\eqref{M1:qd_L} and~\eqref{M1:qd_U} ensure that the needs of each backfill site are met. Constraints~\eqref{M1:zero1} and~\eqref{M1:zero2} filter infeasible transport arcs. Constraint~\eqref{M1:zero3} is an integer constraint that the upper flow limit should be equal to the operating capacity of the operating area. Solving the model $\bf{[M1]}$ generates an efficient scheduling for the carrier.
	
	\subsection{The strategy of the government}\label{Sec:mathematical_BI}
	
	The profit objective for subsidizing is straightforward. We first demonstrate how the pollution related objective is computed in  Section~\ref{Sec:Objective_UL} and present the full bi-objective bi-level model in Section~\ref{Sec:Mathematical_BI}.
	
	\subsubsection{Calculate method of pollution}\label{Sec:Objective_UL}
	
	Pollution from CW transportation consists of two components: emissions from diesel trucks during transportation and pollution from waste treatment at processing facilities. To improve realism, truck emissions are typically estimated using the Comprehensive Modal Emissions Model (CMEM)~\citep{DEMIR2014464}. Based on this model, the Fuel Consumption Rate (FCR) is given by:
	\begin{equation}
		FCR=\frac{\xi(kNV+P_0/\eta)}{\kappa},
	\end{equation}
	where $\xi$ is the fuel-to-air mass ratio, $k$ is the engine friction factor, $N$ and $V$ denote the engine speed and engine displacement, respectively. The parameters $\eta$ and $\kappa$ are constants representing efficiency and the heating value, respectively. $P_0$ is the second-by-second engine power output (in kilowatts) and can be calculated as:
	\begin{equation}
		P_0=\frac{P_{t}}{\eta_{t}}+P_{a},
	\end{equation}
	where $\eta_{t}$ denotes the truck drive train efficiency, and $P_a$ is the engine power requirement associated with the operating losses of the engine and the operation of truck accessories (e.g., air conditioning). $P_a$ is typically assumed to be 0~\citep{DEMIR2011347}. $P_t$ represents the total traction force, which is the force generated by the friction between the truck tires and the road surface. $P_t$ can be calculated as follows:
	\begin{equation}
		P_t=\frac{(M_v\tau+M_vgsin\delta+0.5f_a\varphi Au+M_vgf_rcos\delta)u}{1,000},
	\end{equation}
	where $M_v$ is the truck's total weight in feet $v \in \mathcal{F}_e \cup \mathcal{F}_d$, including both the unloaded weight ($\overline{Q}_v$) and rated load ($Q_v$). $\tau$ is acceleration, $g$ is gravitational acceleration, and $\delta$ is the road angle. The coefficients $f_a$ and $f_r$ represent aerodynamic drag and rolling resistance, respectively. $\varphi$ is air density, and $A$ is the truck's frontal area. $u$ is the truck's speed. For simplicity, \cite{DEMIR2014464} defines $\lambda = \frac{\xi}{\kappa \theta}$, $\gamma = \frac{1}{1,000\eta_t\eta}$, $\alpha = \tau + g \sin \delta + g f_r \cos \delta$, and $\beta = 0.5 f_d \varphi A$, where $\theta$ is the fuel conversion factor from $\frac{gram}{s}$ to $\frac{liter}{s}$, and $\xi$ is typically assumed to be 1~\citep{DEMIR2011347}. Thus, the pollution index (Fuel Consumption, FC) for a diesel truck $v \in \mathcal{F}_d$ over a distance $d$ is a function of speed $u$ and total weight $M = \overline{Q}_v + Q_v$:
	
	\begin{equation}\label{Eq:FC}
		FC(u,M)=\frac{\lambda(kNV+\overline{Q}_v \gamma \alpha u+Q_v \gamma \alpha u+\beta\gamma u^3)d}{u},\forall v\in\mathcal{F}_d.
	\end{equation}

	Pollution from processing facilities primarily depends on the facility's capacity and the volume of CW processed. The pollution generated by the harmless treatment of one tonne of CW at a facility $j \in \mathcal{P}$ is defined by the pollution factor $h_j$. Therefore, the pollution index for processing a truckload of CW from fleet $v \in \mathcal{F}_e \cup \mathcal{F}_d$ at facility $j \in \mathcal{P}$ is expressed as:
	\begin{equation}\label{Eq:PTC}
		HTC=h_jQ_v,\forall j\in\mathcal{P},\forall v\in\mathcal{F}_e\cup\mathcal{F}_d.
	\end{equation}
	
	\subsubsection{Mathematical formulation}\label{Sec:Mathematical_BI}
	
	The objective of the upper-level problem is to minimize pollution induced by CW transport and processing while minimizing the subsidy from the government. The compound objective function could be derived from~\eqref{Eq:FC} and~\eqref{Eq:PTC}. The lower-level problem corresponds to $\bf{[M1]}$. The decision variables for the upper-level problem are $y_{j,v}$s, representing the treatment fee for the harmless treatment per ton of CW carried by fleet $v\in\mathcal{F}_e\cup\mathcal{F}_d$ at processing facility $j\in\mathcal{P}$. They form input parameters to the lower-level problem. The decision variable for the lower-level problem is $x_{i,j,v,t}$, and the optimal solution for the lower-level problem is denoted by $x^{\prime}_{i,j,v,t}$, $i,j\in\{0\}\cup\mathcal{P}\cup S\cup D, v\in\mathcal{F}_e\cup\mathcal{F}_d, t\in \mathcal{T}\cup\overline{\mathcal{T}}$. The bi-level program is given by:
	
	\begin{align}
		&\text{\bf{[M2]}}\notag\\ &\text{Min }
		\begin{aligned}[t]
			F_{1}(\mathbf{x}^{\prime}|\mathbf{y})=& \sum_{i\in\mathcal{P}\cup\mathcal{S}\cup\mathcal{D}}\sum_{j\in\mathcal{P}\cup\mathcal{S}\cup\mathcal{D}}\sum_{v\in\mathcal{F}_d}\sum_{t\in\mathcal{T}}kNV\lambda d_{i,j}x_{i,j,v,t}^{\prime}/u \\
			& +\sum_{i\in\mathcal{P}\cup\mathcal{S}\cup\mathcal{D}}\sum_{j\in\mathcal{P}\cup\mathcal{S}\cup\mathcal{D}}\sum_{v\in\mathcal{F}_d}\sum_{t\in\mathcal{T}}\overline{Q}_v\gamma\lambda\alpha_{i,j}d_{i,j}x_{i,j,v,t}^{\prime} \\
			& +(\sum_{i\in\mathcal{S}}\sum_{j\in\mathcal{P}\cup\mathcal{D}}\sum_{v\in\mathcal{F}_d}\sum_{t\in\mathcal{T}}Q_v\gamma\lambda\alpha_{i,j}d_{i,j}x_{i,j,v,t}^{\prime}+\sum_{i\in\mathcal{P}}\sum_{j\in\mathcal{D}}\sum_{v\in\mathcal{F}_d}\sum_{t\in\mathcal{T}}Q_v\gamma\lambda\alpha_{i,j}d_{i,j}x_{i,j,v,t}^{\prime}) \\
			& +\sum_{i\in\mathcal{P}\cup\mathcal{S}\cup\mathcal{D}}\sum_{j\in\mathcal{P}\cup\mathcal{S}\cup\mathcal{D}}\sum_{v\in\mathcal{F}_d}\sum_{t\in\mathcal{T}}\beta\gamma\lambda d_{i,j}x_{i,j,v,t}^{\prime}u^2 \\
			& +\sum_{i\in\mathcal{S}}\sum_{j\in\mathcal{P}}\sum_{v\in\mathcal{F}_e\cup\mathcal{F}_d}\sum_{t\in\mathcal{T}}h_jQ_vx_{i,j,v,t}^{\prime};
		\end{aligned}\label{M2:obj} \\
		&\text{Subject to:} \notag \\
		&\qquad y_{j,v}\geq PL,\forall j\in\mathcal{P},\forall v\in\mathcal{F}_e\cup\mathcal{F}_d; \label{M2:PL}\\
		&\qquad y_{j,v}\leq PU,\forall j\in\mathcal{P},\forall v\in\mathcal{F}_e\cup\mathcal{F}_d; \label{M2:PU}\\
		\text{where} \notag \\
		&\begin{aligned}
			\mathbf{x}^{\prime}\in \arg\min\{f^{\prime}(\mathbf{x},\mathbf{y})=&\sum_{v\in\mathcal{F}_{e}\cup\mathcal{F}_{d}}\sum_{j\in\mathcal{P}\cup\mathcal{S}\cup D}\sum_{t\in\mathcal{T}}C_{0,v}x_{0,j,v,t} \\
			&+\sum_{v\in\mathcal{F}_{e}\cup\mathcal{F}_{d}}\sum_{i\in\mathcal{P}\cup\mathcal{S}\cup\mathcal{D}}\sum_{j\in\mathcal{P}\cup\mathcal{S}\cup\mathcal{D}}\sum_{t\in\mathcal{T}}C_{1,v}r_{i,j}x_{i,j,v,t} \\
			&-\sum_{v\in\mathcal{F}_{e}\cup\mathcal{F}_{d}}\sum_{i\in\mathcal{S}\cup\mathcal{P}}\sum_{j\in\mathcal{D}}\sum_{t\in\mathcal{T}}C_{2}Q_{v}x_{i,j,v,t}\\
			&-\sum_{v\in\mathcal{F}_{e}\cup\mathcal{F}_{d}}\sum_{i\in\mathcal{S}}\sum_{j\in\mathcal{P}}\sum_{t\in\mathcal{T}}(C_{2}-y_{j,v})Q_{v}x_{i,j,v,t}\};
		\end{aligned}\label{M2:LL_obj}\\
		&\text{Subject to:} \notag\\
		&\qquad \text{Constraints~\eqref{M1:start}-\eqref{M1:zero3}.} \notag
	\end{align}
	
	Constraint~\eqref{M2:PL} limits the treatment fee to the government’s budget, with $PL$ representing the minimum acceptable price. Constraint~\eqref{M2:PU} sets the maximum price, $PU$, acceptable to the carrier; exceeding this may lead to illegal shipment or processing risks~\citep{belien2014municipal}. Constraint~\eqref{M2:LL_obj} minimizes the negative of the carrier’s profit (i.e., maximize the carrier’s profit), derived from function~\eqref{M1:obj} by replacing $y^{\prime}$ with the variable $y_{j,v}, j\in\mathcal{P}, v\in\mathcal{F}_e\cup\mathcal{F}_d$. Model $\bf{[M2]}$ has the following property:
	
	\begin{property}\label{Property:p1}
		As subsidy $\sum_{v\in\mathcal{F}_{e}\cup\mathcal{F}_{d}}\sum_{i\in\mathcal{S}}\sum_{j\in\mathcal{P}}\sum_{t\in\mathcal{T}}y_{j,v}Q_{v}x_{i,j,v,t}$ increases, the objective of model $\bf{[M2]}$ is non-increasing. 
	\end{property}
	
	\begin{property}\label{Property:p2}
		The objective of $\bf{[M2]}$ has a natural lower bound, e.g., pollution will not be less than 0.
	\end{property}
	
	The proofs are omitted because they are straightforward. Property~\ref{Property:p1} shows that the government can reduce pollution by increasing the subsidy investment. However, Property~\ref{Property:p2} indicates that beyond a certain subsidy level, further increases do not result in additional pollution reduction, leading to multiple optimal solutions for $\bf{[M2]}$. Since the government aims to minimize pollution with the least subsidy investment, we introduce a second objective function to minimize the subsidy. When processing facility $j\in\mathcal{P}$ treats a load of waste from fleet $v\in\mathcal{F}_e\cup\mathcal{F}_d$, the carrier should pay the government treatment fee:
	\begin{equation}
		\mathrm{CWTF}=y_{j,v}Q_v,\forall j\in\mathcal{P},\forall v\in\mathcal{F}_e\cup\mathcal{F}_d.
	\end{equation}
	
	To simplify the mathematical formulation, let $\mathrm{CWTF}$ represent the government’s revenue from the harmless treatment of one tonne of waste, which is inversely proportional to the subsidy investment. Accordingly, the bi-level model is formulated as follows:
	
	\begin{align}
		&\text{\bf{[M3]}}\notag\\ &\text{Min }
		\begin{aligned}[t]
			F_{2}(\mathbf{x},\mathbf{y})=-\sum_{i\in \mathcal{S}}\sum_{j\in\mathcal{P}}\sum_{v\in\mathcal{F}_e\cup\mathcal{F}_d}\sum_{t\in\mathcal{T}}y_{j,v}Q_vx_{i,j,v,t}^{\prime};
		\end{aligned}\label{M3:obj} \\
		&\text{Subject to:} \notag \\
		&\qquad F_{1}(\mathbf{x}^{\prime}|\mathbf{y})=F_{1}(\mathbf{x}^*_{M2}); \label{M3:UP_obj}\\
		&\qquad \text{Constraints}~\eqref{M2:PL}-\eqref{M2:PU}; \notag \\
		\text{where} \notag \\
		&\begin{aligned}
			\mathbf{x}^{\prime}\in \arg\min\{f^{\prime}(\mathbf{x},\mathbf{y})\};
		\end{aligned}\label{M3:LL_obj}\\
		&\text{Subject to:} \notag\\
		&\qquad \text{Constraints~\eqref{M1:start}-\eqref{M1:zero3}.} \notag
	\end{align}
	
	The objective function~\eqref{M3:obj} minimizes the subsidy investment of the government. The function $F_{1}(\cdot)$ in constraint~\eqref{M3:UP_obj} is derived from objective function~\eqref{M2:obj}, where $\mathbf{x}^*_{M2}$ represents the optimal solution of $\mathbf{x}^*$ acquired by solving \textbf{[M2]}, and $F_{1}(\mathbf{x}^*_{M2})$ is optimal value for \textbf{[M2]}. Therefore, constraint~\eqref{M3:UP_obj} ensures that the total pollution must be minimized. Constraint~\eqref{M3:LL_obj} is identical to constraint~\eqref{M2:LL_obj}. Solving model \textbf{[M3]} yields the economical and environmentally friendly scheduling.
	
	\section{Solution approach}\label{Sec:Ma}
	
	Section~\ref{Sec:solution_a} outlines a hybrid approach to address the bi-objective bi-level optimization model \textbf{[M3]}. The method for calculating the model gap is detailed in Section~\ref{Sec:GAP}.
	
	\subsection{Solution algorithm}\label{Sec:solution_a}
	
	\subsubsection{Global framework}
	
	For the model \textbf{[M1]}, a commercial solver is used. For the model \textbf{[M3]}, the hybrid approach combines a multi-objective particle swarm optimization (MOPSO)~\citep{1304847} to search for the solution to the upper-level problem and a commercial solver to solve the lower-level problem. We first define the total number of fleets $V$ (i.e., $V=E+\overline{D}$). The hybrid approach begins by randomly initializing a particle swarm of $K$ particles $\mathbf{y}^k=(y_{1,1}^k,\ldots,y_{1, V}^k,y_{2,1}^k,\ldots,y_{j,v}^k,\ldots,y_{P, V}^k)$, $k=1,\ldots, K$, representing the treatment fee set by government at processing facility $j\in\mathcal{P}$ for truck fleet $v\in\mathcal{F}_e\cup\mathcal{F}_d$. Meanwhile, randomly initialize the corresponding velocity $\bm{\mu}^k=(\mu_{1,1}^k,\ldots,\mu_{1,V}^k,\mu_{2,1}^k,\ldots,\mu_{j,v}^k,\ldots,\mu_{P,V}^k)$ of each particle $\mathbf{y}^k$. We input each $\mathbf{y}^k$ into the lower-level problem of \textbf{[M3]} to obtain a solution $\mathbf{x}^k$, where $\mathbf{x}$ represents the vector of all decision variables of the lower-level problem. The pseudocode outlines the steps of the proposed hybrid approach as follows:
	
	\begin{algorithm}
		\caption{Pseudocode of the Hybrid Approach}
		\label{Algorithm-main}
		\begin{algorithmic}[1]
			\State Initialize a particle swarm of $K$ particles $\mathbf{y}^k=(y_{1,1}^k,\ldots,y_{1,V}^k,y_{2,1}^k,\ldots,y_{j,v}^k,\ldots,y_{P,V}^k)$, velocity $\bm{\mu}^k=(\mu_{1,1}^k,\ldots,\mu_{1,V}^k,\mu_{2,1}^k,\ldots,\mu_{j,v}^k,\ldots,\mu_{P,V}^k)$, $k = 1, \ldots, K$, $j\in\mathcal{P}$, $v\in\mathcal{F}_e\cup\mathcal{F}_d$
			\Repeat
			\For{$k = 1$ to $K$}
			\State Repair $\mathbf{y}^k$ to satisfy constraints~\eqref{M2:PL} and~\eqref{M2:PU}, retaining each $y_{j,v}^k$ to five decimal digits
			\State Input $\mathbf{y}^k$ into the lower level problem and solve it using solver to obtain $\mathbf{x}^k$
			\State Compute $F_{1}(\mathbf{x}^k)$ and $F_{2}(\mathbf{x}^k, \mathbf{y}^k)$ for each $(\mathbf{x}^k, \mathbf{y}^k)$
			\State Update $\mathbf{y}^k$ using MOPSO (introduced in Section~\ref{Sec:MOPSO})
			\EndFor
			\Until{$G$ iterations are completed}
			\State \text{Output:} $(\mathbf{x}^k, \mathbf{y}^k)$ with the best $F_{1}(\mathbf{x}^k)$
		\end{algorithmic}
	\end{algorithm}
	
	Line 4 of Algorithm~\ref{Algorithm-main} requires a repair routine that projects any $y_{j,v}^k \in \mathbf{y}^k$ to $PL$ or $PU$ if it violates constraint~\eqref{M2:PL} or~\eqref{M2:PU}. The velocity of the particle is then updated by applying a negative sign, prompting the particle to search in the opposite direction. Since the treatment fee must be a fixed value in practice, even though $y_{j,v}^k$ is treated as a continuous variable in the model, it is truncated to five decimal places after initialization or updating.
		
		\subsubsection{Upper-level search based on MOPSO}\label{Sec:MOPSO}
		
		The upper constraint~\eqref{M3:UP_obj} of model \textbf{[M3]} can create challenges in generating feasible solutions $\mathbf{y}^k$, $k=1,\ldots, K$, since it represents the objective of minimizing pollution in the upper-level problem. To address this, we treat functions $F_1$ and $F_2$ as the primary and secondary objective functions, respectively. Previous experiments have shown that heuristic methods designed for single-objective optimization often focus solely on the primary objective function $F_1$, largely ignoring $F_2$. To overcome this, we initially assign equal priority to both $F_1$ and $F_2$ and use MOPSO to identify the Pareto optimal solution set. Afterward, we select the solution with the minimum value of $F_1$ from this set as the final high-quality solution. This approach ensures that the solution converges toward the objectives of both $F_1$ and $F_2$.
		
		Line 6 of Algorithm~\ref{Algorithm-main} calculates $\mathbf{x}^k$, $\mathbf{y}^k$, $F_1(\mathbf{x}^k)$, and $F_2(\mathbf{x}^k, \mathbf{y}^k)$. Here, $\mathbf{y}^k$ is a particle in MOPSO, while $F_1(\mathbf{x}^k)$ and $F_2(\mathbf{x}^k, \mathbf{y}^k)$ represent the fitness values corresponding to the two objective functions for the current $\mathbf{x}^k$ and $\mathbf{y}^k$. MOPSO iteratively adjusts the particles to move toward optimal positions. The velocity $\bm{\mu}^k$ of each particle is influenced by both its personal best position ($\mathbf{ybest}^k$) and the position of the global best particle ($\mathbf{gbest}^k$). The global best selection mechanism follows the approach by~\cite{1304847}, with the main steps summarized as follows: (1) Define three arrays: $EAY$, $EAF_1$, and $EAF_2$, each of length $M$, to store the positions of Pareto optimal particles and their corresponding objective function values $F_1$ and $F_2$, respectively. (2) Divide the objective space into hypercubes by creating a grid with $HQ = m \times m$ cells. Each hypercube represents a region, and the particle density in each region is used to calculate its distribution. (3) Use a roulette-wheel mechanism to select a global best particle ($\mathbf{gbest}^k$) from $EAY$. Particles in less populated hypercubes are given higher selection probabilities, ensuring that under-explored regions are prioritized (e.g., selecting $EAY^q[i]$ at iteration $q$). (4) If the number of Pareto solutions exceeds the length of $EAY$, particles in densely populated hypercubes are more likely to be deleted, preserving diversity in the repository. For additional technical details, refer to~\cite{1304847}.
		
		In any iteration $q$, the velocity component $\mu_{j,v}^{k,q}$ of the particle $\mathbf{y}^k$ is updated according to the following equation~\citep{kennedy1995particle}:
		\begin{equation}
			\mu_{j,v}^{k,q}=\omega_{0}\mu_{j,v}^{k,q-1}+\omega_{1}R_{1}(pbest_{j,v}^{k}-y_{j,v}^{k,q-1})+\omega_{2}R_{2}(gbest_{j,v}-y_{j,v}^{k,q-1}),
		\end{equation}
		where $\omega_{0}$ represents the inertia weight, $\omega_{1}$ and $\omega_{2}$ represent cognitive and social perceptual parameters; $R_1$ and $R_2$ are random numbers uniformly distributed in the interval [0,1]; $gbest_{j,v}=EAY[i]_{j,v}, i\in\{1,\ldots, M\}$. The new position of particle $\mathbf{y}^k$ at iteration $q$ is calculated using the following equation~\citep{kennedy1995particle}:
		\begin{equation}\label{y_update}
			\mathbf{y}^{k,q}=\mathbf{y}^{k,q-1}+\bm{\mu}^{k,q}.
		\end{equation}
		
		Particle swarm optimization converges quickly and easily falls into local optimal solutions, so we introduce random perturbations into the algorithm based on study~\cite{soares2020designing}. If the solution does not change after $G^{'}$ iterations, the algorithm introduces perturbations with probability $p_m$, enhancing the exploration capability of the particles. Specifically, for each particle $\mathbf{y}^k$, after being updated by Eq.~\eqref{y_update} and before being repaired (Line 4 in Algorithm~\ref{Algorithm-main}), a perturbation $\varepsilon$ is applied within the range $\left[-\sigma\left(PU-PL\right),\sigma\left(PU-PL\right)\right],\mathrm{i.e.},y_{j,v}^{k,q}\leftarrow y_{j,v}^{k,q}+\varepsilon$. The algorithm terminates when the difference between objective function values from two consecutive iterations falls below a specified threshold $\epsilon_1$, In other words, the following two inequalities are satisfied:
		\begin{equation}
			\frac{EAF_1^q[index]-EAF_1^{q-1}[index]}{EAF_1^q[index]}<\epsilon_1,
		\end{equation}
		\begin{equation}
			\frac{EAF_2^q[index]-EAF_2^{q-1}[index]}{EAF_2^q[index]}<\epsilon_1,
		\end{equation}
		where $EAF_{1}^{q}[index] = \min\{EAF_{1}^{q}[1],\ldots,EAF_{1}^{q}[M]\}$, $EAF_2^q[index]$ is optimal under the condition that $EAF_{1}^{q}[index]$ is guaranteed to be optimal.
		
		\subsubsection{Accelerated lower-level problem solving}\label{Sec:Sovel_LL}
		
		The time-space network graph $\mathcal{G}$ is a complete graph with an exponential number of edges (Fig.~\ref{Fig:time_space}), which places a significant memory burden on the computer. However, most edges in the graph have a value of 0. For instance, a truck does not travel from sites in $\mathcal{P}$ to other sites in $\mathcal{P}$, from $\mathcal{S}$ to $\mathcal{S}$, or from $\mathcal{D}$ to $\mathcal{D}$; likewise, a production site $i \in \mathcal{S}$ only transports waste to a limited number of backfill sites $j \in \mathcal{D}$, or possibly just one. By removing these impossible edges (those with a flow rate of 0) to obtain support graph, the model size is greatly reduced. This reduction allows the commercial solver to provide an exact solution in a reasonable amount of time.
		
		It is important to note that for each $\mathbf{y}^k$, there may be multiple possible solutions for $\mathbf{x}^k$. However, in practice, the carrier is indifferent to pollution and tends to select an $\mathbf{x}^k$ randomly~\citep{liu2019expand}. This randomness may lead to sub-optimal values for $F_1(\mathbf{x}^k)$ and $F_2(\mathbf{x}^k, \mathbf{y}^k)$, which can influence the MOPSO algorithm when updating the particle positions~\citep{soares2020designing}. Therefore, incorporating a random selection strategy can enhance the robustness of the model. Section~\ref{Sec:Cp} discusses the accuracy of the proposed method.
		
		\subsection{Optimality gap estimation and algorithmic evaluation}\label{Sec:GAP}
		
		The upper and lower bounds of the model cannot be directly determined by the proposed hybrid approach and require further discussion. This section provides a detailed analysis of the upper and lower bounds of pollution and defines the utilization rate of government subsidies.
		
		\subsubsection{Gap of pollution index}
		
		We define $\mathbf{x}^*_{M1}$ as the optimal solution of model \textbf{[M1]}, which focuses on the carrier's profit while ignoring environmental pollution. The following proposition holds:
		
		\begin{proposition} $F_1(\mathbf{x}_{M1}^*)$ is the upper bound of the pollution index $F_1$ in model \textbf{[M3]}. \end{proposition}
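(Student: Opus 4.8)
The plan is to use the fact that the government can always ``do nothing'': by setting the subsidy so that the carrier faces exactly the treatment fee of \textbf{[M1]}, the induced carrier problem becomes \textbf{[M1]} itself, so the pollution-minimizing programs \textbf{[M2]} and \textbf{[M3]} must be able to attain a pollution level no worse than $F_1(\mathbf{x}^*_{M1})$. This quantity then serves as the upper bound used in the gap computation.

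I would proceed as follows. \emph{Step 1:} Record that the $\mathbf{x}$-feasible set cut out by~\eqref{M1:start}--\eqref{M1:zero3} is the same in \textbf{[M1]} and in the lower-level problems of \textbf{[M2]} and \textbf{[M3]}, and that the expression in~\eqref{M2:obj} depends on $\mathbf{x}^{\prime}$ only, so $F_1$ is a function of the schedule alone. \emph{Step 2:} Take the candidate leader decision $\hat{\mathbf{y}}$ with $\hat y_{j,v}=y^{\prime}$ for every $j\in\mathcal{P}$ and $v\in\mathcal{F}_e\cup\mathcal{F}_d$; under the natural assumption $PL\le y^{\prime}\le PU$, this $\hat{\mathbf{y}}$ satisfies~\eqref{M2:PL}--\eqref{M2:PU}. \emph{Step 3:} Substituting $\hat{\mathbf{y}}$ into~\eqref{M2:LL_obj} makes the lower-level objective $f^{\prime}(\mathbf{x},\hat{\mathbf{y}})$ coincide identically with $f(\mathbf{x})$ of~\eqref{M1:obj}; together with Step~1, the lower-level problem induced by $\hat{\mathbf{y}}$ is precisely \textbf{[M1]}, so $\mathbf{x}^*_{M1}$ is one of its optimal responses. \emph{Step 4:} Hence $(\hat{\mathbf{y}},\mathbf{x}^*_{M1})$ is feasible for \textbf{[M2]} with objective value $F_1(\mathbf{x}^*_{M1})$, which forces $F_1(\mathbf{x}^*_{M2})\le F_1(\mathbf{x}^*_{M1})$. \emph{Step 5:} Constraint~\eqref{M3:UP_obj} pins the pollution index of any feasible --- hence of any optimal --- solution of \textbf{[M3]} to $F_1(\mathbf{x}^*_{M2})$; chaining this with Step~4 yields $F_1\le F_1(\mathbf{x}^*_{M1})$, as claimed.

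The only real friction is in Step~4, since the lower-level problem may possess several cost-optimal schedules with different pollution. The argument as written uses the optimistic reading of the bilevel program --- among the carrier's cost-optimal responses to $\hat{\mathbf{y}}$, the government may select one attaining the value $F_1(\mathbf{x}^*_{M2})$ --- which is consistent with the ``$\mathbf{x}^{\prime}\in\arg\min$'' notation in~\eqref{M2:LL_obj} and~\eqref{M3:LL_obj}; under the pessimistic reading one should instead take $\mathbf{x}^*_{M1}$ to be the pollution-worst \textbf{[M1]}-optimal schedule and state the bound against that quantity. I would also note in passing that the strict inequalities in~\eqref{M1:qs_U} and~\eqref{M1:qd_U} are harmless here, as the replication argument never requires a minimizer to be attained where those constraints are tight. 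Everything else is routine bookkeeping.
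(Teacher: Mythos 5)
Your proof is correct and takes essentially the same route as the paper's one-sentence argument: the no-intervention schedule $\mathbf{x}^*_{M1}$ is attainable within the bi-level framework, so the optimized pollution index cannot exceed $F_1(\mathbf{x}^*_{M1})$. In fact your write-up is more careful than the paper's, which merely asserts that $\mathbf{x}^*_{M1}$ is ``feasible for \textbf{[M3]}''; you supply the missing details — the explicit leader choice $\hat{\mathbf{y}}$ with $\hat y_{j,v}=y^{\prime}$ (requiring $PL\le y^{\prime}\le PU$), the resulting identity of the induced lower level with \textbf{[M1]}, the feasibility of $(\hat{\mathbf{y}},\mathbf{x}^*_{M1})$ for \textbf{[M2]}, and the chaining through constraint~\eqref{M3:UP_obj} — together with the pertinent remark about multiple cost-optimal lower-level responses and the optimistic reading of~\eqref{M2:LL_obj}.
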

		
		\begin{proof} Since $\mathbf{x}^*_{M1}$ is the optimal solution of model \textbf{[M1]}, it is also a feasible solution for model \textbf{[M3]}. Therefore, $F_1(\mathbf{x}_{M1}^*)$ serves as an upper bound for the pollution index in model \textbf{[M3]}. \end{proof} 
		
		$F_1(\mathbf{x}_{M1}^*)$ can be interpreted as the pollution generated by the carrier while transporting CW without any subsidy measures. Therefore, the rate of pollution reduction (RoPR) can be expressed as the gap between $F_1(\mathbf{x}_{M3}^*)$ and $F_1(\mathbf{x}_{M1}^*)$:
		
		\begin{equation}
			RoPR=\frac{F_1(\mathbf{x}_{M1}^*)-F_1(\mathbf{x}^*_{M3})}{F_1(\mathbf{x}_{M1}^*)}\times100\%,
		\end{equation}
		where $\mathbf{x}_{M3}^*$ represents the solution obtained by solving model \textbf{[M3]} using the proposed hybrid approach. If we require $\mathbf{x}$ to be feasible rather than optimal, we can relax the bi-level model \textbf{[M2]} into a single-level model, known as the \textit{high point problem}~\citep{moore1990mixed}:
		\begin{align}
			&\text{\bf{[M2-HPR]}}\notag\\ &\min
			\begin{aligned}[t]
				F_{1}(\mathbf{x})
			\end{aligned}\label{HPR:obj} \\
			&\text{subject to:} \notag \\
			&\qquad \text{Constraints}~\eqref{M1:start}-\eqref{M1:zero3},~\eqref{M2:PL},~\eqref{M2:PU}. \notag
		\end{align}
		
		We define $\mathbf{x}_{HPR}^*$ represents the optimal solution of model \textbf{[M2-HPR]}. The following proposition holds:
		
		\begin{proposition}
			$F_1(\mathbf{x}_{HPR}^*)$ is the lower bound of $F_1$ in model \textbf{[M3]}.
		\end{proposition}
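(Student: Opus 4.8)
The plan is to show that \textbf{[M2-HPR]} is a relaxation of the bi-level model \textbf{[M3]}, so that its optimal value is a lower bound. First I would observe that any feasible point of \textbf{[M3]} consists of a treatment-fee vector $\mathbf{y}$ satisfying~\eqref{M2:PL}--\eqref{M2:PU} together with the associated lower-level optimal flow $\mathbf{x}^{\prime}$, which by~\eqref{M3:LL_obj} satisfies Constraints~\eqref{M1:start}--\eqref{M1:zero3}. Hence the pair $(\mathbf{x}^{\prime},\mathbf{y})$ is feasible for \textbf{[M2-HPR]}, because \textbf{[M2-HPR]} imposes exactly Constraints~\eqref{M1:start}--\eqref{M1:zero3} on $\mathbf{x}$ and~\eqref{M2:PL}--\eqref{M2:PU} on $\mathbf{y}$, and drops the requirement that $\mathbf{x}$ be a minimizer of the lower-level objective $f^{\prime}(\mathbf{x},\mathbf{y})$ (as well as the pollution-optimality constraint~\eqref{M3:UP_obj}).

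Since $F_1$ depends on $\mathbf{x}$ through the same expression~\eqref{M2:obj} in both models, and the feasible region of \textbf{[M2-HPR]} (projected onto $\mathbf{x}$) contains every $\mathbf{x}$ that arises from a feasible point of \textbf{[M3]}, I would conclude that $\min F_1$ over \textbf{[M2-HPR]} is no larger than $F_1$ evaluated at the \textbf{[M3]} solution; that is, $F_1(\mathbf{x}_{HPR}^*)\le F_1(\mathbf{x}_{M3}^*)$. Combining this with the previous proposition ($F_1(\mathbf{x}_{M1}^*)$ as upper bound), we obtain a two-sided sandwich on the pollution index achievable in \textbf{[M3]}, which is what the gap estimation in Section~\ref{Sec:GAP} requires.

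The main subtlety — and the only place the argument could slip — is the pollution-optimality constraint~\eqref{M3:UP_obj}, $F_{1}(\mathbf{x}^{\prime}|\mathbf{y})=F_{1}(\mathbf{x}^*_{M2})$, which further restricts \textbf{[M3]} relative to the plain bi-level program. I would note that adding this equality constraint only shrinks the feasible set of \textbf{[M3]}, so the relaxation argument still goes through: \textbf{[M2-HPR]} relaxes both the lower-level optimality conditions and~\eqref{M3:UP_obj}. One should also check that \textbf{[M3]} is feasible in the first place (otherwise the bound is vacuous), which follows from the existence of $\mathbf{x}^*_{M2}$ for \textbf{[M2]}. With these points in place the proof is a short inclusion-of-feasible-regions argument; I do not expect any real obstacle beyond carefully tracking which constraints each model retains.
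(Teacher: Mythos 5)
Your proposal is correct and follows essentially the same route as the paper: the paper's proof also rests on the observation that \textbf{[M2-HPR]} is a relaxation (it argues via \textbf{[M2]}, whose optimal value $F_1(\mathbf{x}^*_{M2})$ is exactly what constraint~\eqref{M3:UP_obj} pins $F_1$ to in \textbf{[M3]}, which is equivalent to your direct feasible-region inclusion). Your treatment is somewhat more explicit than the paper's—in particular the handling of~\eqref{M3:UP_obj} and the feasibility of \textbf{[M3]}—but it is the same relaxation argument, so no further comparison is needed.
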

		
		\begin{proof}
			$F_1(\mathbf{x}_{HPR}^*)$ is the lower bound that can be interpreted from two perspectives: 1) model \textbf{[M2-HPR]} is a relaxation of model \textbf{[M2]}. 2) The objective function~\eqref{HPR:obj} and constraints~\eqref{M1:start}-\eqref{M1:zero3} do not contain the decision variable $\mathbf{y}$, so constraints~$\eqref{M2:PL}$ and~\eqref{M2:PU} can be removed from the model \textbf{[M2-HPR]}. Observing models \textbf{[M2-HPR]} and \textbf{[M1]}, we can see that they represent the truck schedule developed by the carrier and the government under the same operating conditions, respectively.
		\end{proof}
		
		Therefore, the GAP of the objective function $F_1$ in model \textbf{[M3]} is calculated as follows:
		\begin{equation}
			GAP\_F_1=\frac{F_1(\mathbf{x}_{M3}^*)-F_1(\mathbf{x}_{HPR}^*)}{F_1(\mathbf{x}_{HPR}^*)}\times100\%,
		\end{equation}
		
		\subsubsection{Effective subsidy rate of government}
		
		To reduce pollution, the carrier needs to change trucks’ schedules. This will result in additional transportation costs, which should be compensated by the government. Although higher subsidies initially reduce pollution, their effect diminishes with further increases, eventually only raising the carrier’s profit. The following formula measures the relationship between the carrier’s profit and the government’s revenue with subsidizing:
		\begin{equation}
			|F_2(\mathbf{x}_{M3}^*)-F_2(\mathbf{x}_{M1}^*)|=|f^{\prime}(\mathbf{x}_{M3}^*,\mathbf{y}_{M3}^*)-f(\mathbf{x}_{M1}^*)|+ES,
		\end{equation}
		where the terms $-f(\mathbf{x}_{M1}^*)$ and $-F_2(\mathbf{x}_{M1}^*)$ represent the carrier's profit and the government's revenue, respectively, without any subsidy. These values are derived from solving model \textbf{[M1]}. When high-quality subsidies are applied (corresponding to model \textbf{[M3]}), the carrier's profit and the government's revenue are $-f^{\prime}(\mathbf{x}_{M3}^*,\mathbf{y}_{M3}^*)$ and $-F_2(\mathbf{x}_{M3}^*)$, respectively. Therefore, $|F_2(\mathbf{x}_{M3}^*)-F_2(\mathbf{x}_{M1}^*)|$, $|f^{\prime}(\mathbf{x}_{M3}^*,\mathbf{y}_{M3}^*)-f(\mathbf{x}_{M1}^*)|$, $ES$ represent the government subsidy, ineffective subsidy (i.e., increase in the carrier's profit), and effective subsidy, respectively.
		
		To avoid ineffective subsidies, we define the Effective Subsidy Rate (ESR) as an index to evaluate the performance of model \textbf{[M3]} in achieving the objective function $F_2$. The ESR ranges from $(0, 1]$, with values closer to 1 indicating a more effective subsidy scheme. The ESR is defined as follows:
		
		\begin{equation}\begin{aligned}
				ESR& =\frac{ES}{|F_2(\mathbf{x}_{M3}^*)-F_2(\mathbf{x}_{M1}^*)|}\times100\%  \\
				&=\frac{|F_{2}(\mathbf{x}_{M3}^{*})-F_{2}(\mathbf{x}_{M1}^{*})|-|f^{\prime}(\mathbf{x}_{M3}^{*},\mathbf{y}_{M3}^{*})-f(\mathbf{x}_{M1}^{*})|}{|F_{2}(\mathbf{x}_{M3}^{*})-F_{2}(\mathbf{x}_{M1}^{*})|}\times100\%.
		\end{aligned}\end{equation}
		
		\section{Case study in Chengdu}\label{Sec:Cs}
		
		\subsection{Experiment settings}\label{Sec:parameter_set}
		
		We conduct numerical experiments using real data from Chengdu. Chengdu is one of China's new first-tier cities with a resident population of over 20 million. Currently, there are more than 15,000 trucks in Chengdu city. Fig.~\ref{Fig:trucks} shows the frequency distribution of the number of trucks owned by enterprises, with most enterprises owning fewer than 100 trucks. According to study~\cite{HAN2023}, we consider a large case of 240 trucks that serves 30 sites in Longquanyi District, Chengdu (see Fig.~\ref{Fig:opreation_area}), including 17 production sites, 10 backfill sites, and 3 processing facilities. We review government statistical reports and interview the carrier to collect the parameters for the experiment. There are mainly 4 sets of parameters in use, i.e., travel time-related, cost-related parameters, pollution-related, and hybrid approach-related. The values of these parameters are depicted in~\ref{Appendix:parameters} and are explained below:
		
		\begin{figure}[htbp]
			\centering
			\includegraphics[width=0.5\textwidth]{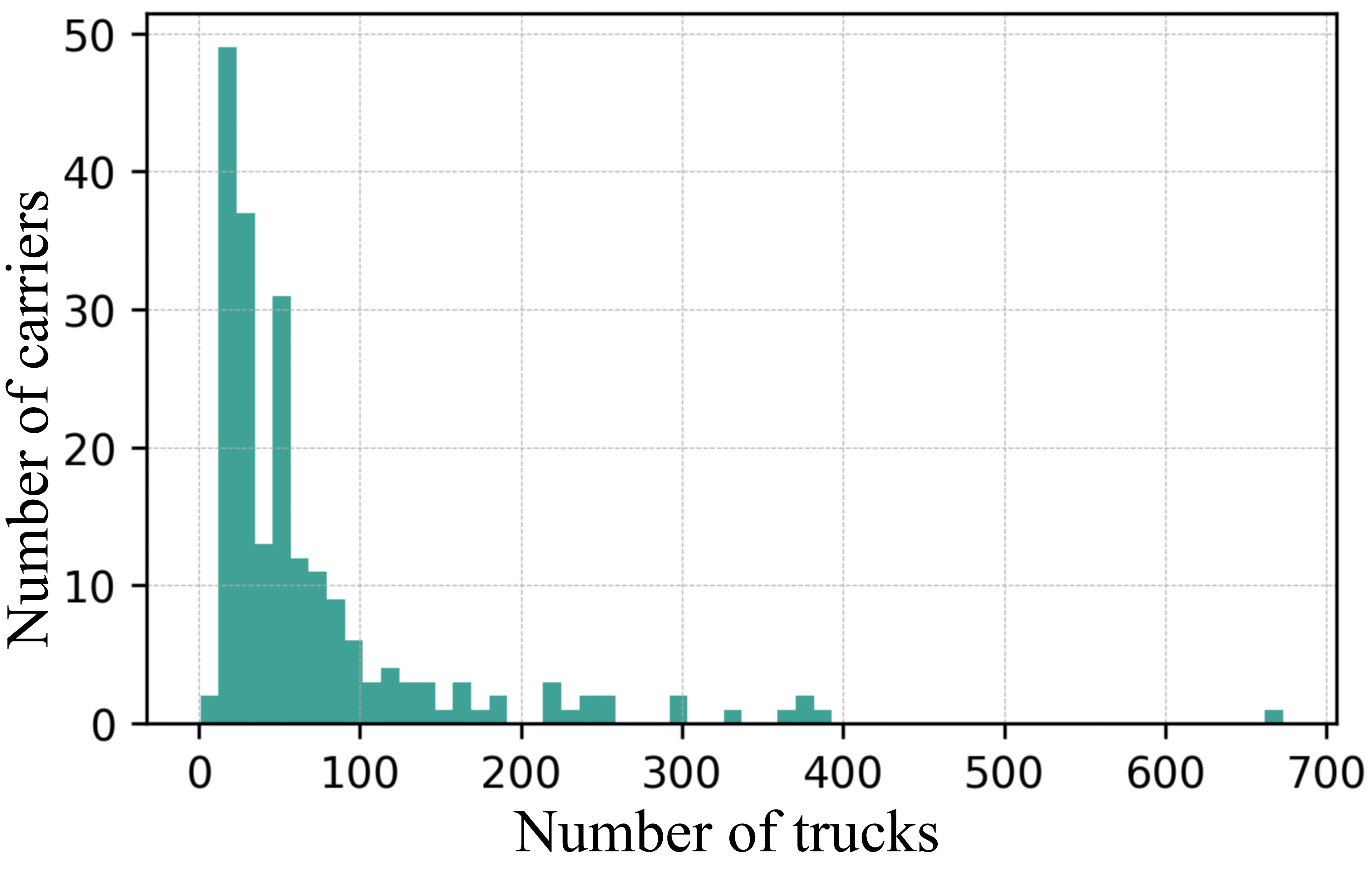}
			\caption{\label{Fig:trucks}Frequency distribution of trucks held by carriers}
		\end{figure}
		
		The distance $d$ between each site is obtained via the~\cite{amapAPI}. We take a planning period of 10 hours with a time interval $\Delta t=10$ minutes, resulting in $T=60$ planning periods. It is difficult to accurately measure the traveling time between sites due to the uncertainty of traffic conditions on the road, so it is usually approximated by the average state in practice~\citep{chu2012optimization,yazdani2021improving}. The average speed of a truck traveling in Chengdu is set as $u=30$ kilometer per hour. Therefore, the travel time between sites is calculated as $\overline{r}_{i,j}=\frac{d_{i,j}}{u}$ minutes, $i,j\in\{0\}\cup\mathcal{P}\cup\mathcal{S}\cup\mathcal{D}$. To ensure sufficient time redundancy, we set the traveling time as an integer multiple of the time interval $\Delta t$ and round upwards, i.e. $r_{i,j} = \left\lceil \frac{\overline{r}_{i,j}}{\Delta t} \right\rceil$, so the number of planning periods is now approximately $\overline{T}=\max\{r_{i,j}\}+1$, $i,j\in\{0\}\cup\mathcal{P}\cup\mathcal{S}\cup\mathcal{D}$. The time to load/unload a truck load of CW is usually 2-5 minutes, depending on the uncertain operating conditions. By \citeauthor{chu2012optimization}, we set the maximum number of load/unload operations per time interval $\Delta t$ at the production site, backfill site, and processing facility to 2, 3, and 3, respectively.
		
		We consider both diesel and electrical truck fleets. \cite{HAN2023} explores the cost components of trucks in detail. For a diesel truck with an age of 5 years, fixed cost = purchase cost + maintenance cost - residual value = 987,000 + 149,000 - 185,000 = 951,000 CNY, then the average daily fixed cost = 951,000/365/5 = 521 CNY. Adding the driver's salary, we set the fixed cost of the daily operation of diesel trucks as $C_{0,v} = 750~\text{CNY},v\in\mathcal{F}_d$. For an electrical truck, the purchase cost, maintenance cost, and residual value during the operation period of 5 years are 521,000, 196,000, and 100,000 CNY, respectively. The driver's salary is the same as that of the diesel truck, so the daily fixed cost of the electrical truck is calculated as $C_{0,v} = 550~\text{CNY},v\in\mathcal{F}_e$. The diesel price in Chengdu city is 7.8 CNY per liter, and the truck consumes 50 liters of fuel for 100 kilometers, so the cost of traveling for a time interval $\Delta t$ is calculated as $C_{1,v}=19.5~\text{CNY},v\in\mathcal{F}_d$. The night-time price of electricity is 0.4 CNY per kilowatt hour (kWh), while the daytime price is 1.5 CNY per kWh. We therefore take the average price of electricity as 0.95 CNY per kWh. The electrical truck consumes 200 kWh for 100 kilometers, so the cost of traveling for a time interval $\Delta t$ is calculated as $C_{1,v}=9.5~\text{CNY},v\in\mathcal{F}_e$. The transport price and treatment fee of CW are dynamically adjusted in practice, and there are no robust pricing standards in Chengdu. Through consultation with practitioners in the construction industry, it is known that the average transport price of CW is 20-30 CNY per tonne, and the treatment fee is 3-7 CNY per tonne. Therefore, we set the transport price $C_2=25$ CNY per tonne, and the market price for treatment fee $y^{\prime}=5$ CNY per tonne. The volume of CW produced is difficult to obtain, so we make reasonable assumptions based on publicly available data. According to the government statistics report~\citep{chengdu_waste_plan_2024}, the Longquanyi District in Chengdu City produces approximately 180,000 tonnes of CW per day, with each production site generating an average of 900 tonnes of CW. Therefore, we set the daily production and backfill from construction sites as a normal distribution with a mean of 900 and a standard deviation of 100.
		
		For further details on the parameters of electric and diesel trucks, refer to ~\cite{NCWHT} and \cite{DCWHT}, respectively. Emission model parameters are from \cite{barth2005development} and presented in Table~\ref{Tab:parameter_value}. We assume the acceleration $\tau$ and road angle $\delta$ are both 0 based on regional traffic conditions\citep{barth2005development, DEMIR2011347}. Through consultation with practitioners in the construction industry, we set up three processing facilities to treat one tonne of CW resulting in pollution indices of $h_1=0.2,h_2=0.4,h_3=0.6$ respectively.
		
		After a large number of repeated experiments, we found that the following values of the parameters will lead to quick convergence. $\omega_0=0.8,\omega_1=0.1,\omega_2=0.1,M=200,m=10,p_m=0.2,G^{\prime}=3,\sigma=0.2,\epsilon=0.001$. The number of particles $K=40$. The upper-level objective function will not further improve usually after 20 iterations.		
		
		\begin{figure}[htbp]
			\centering
			\includegraphics[width=0.45\textwidth]{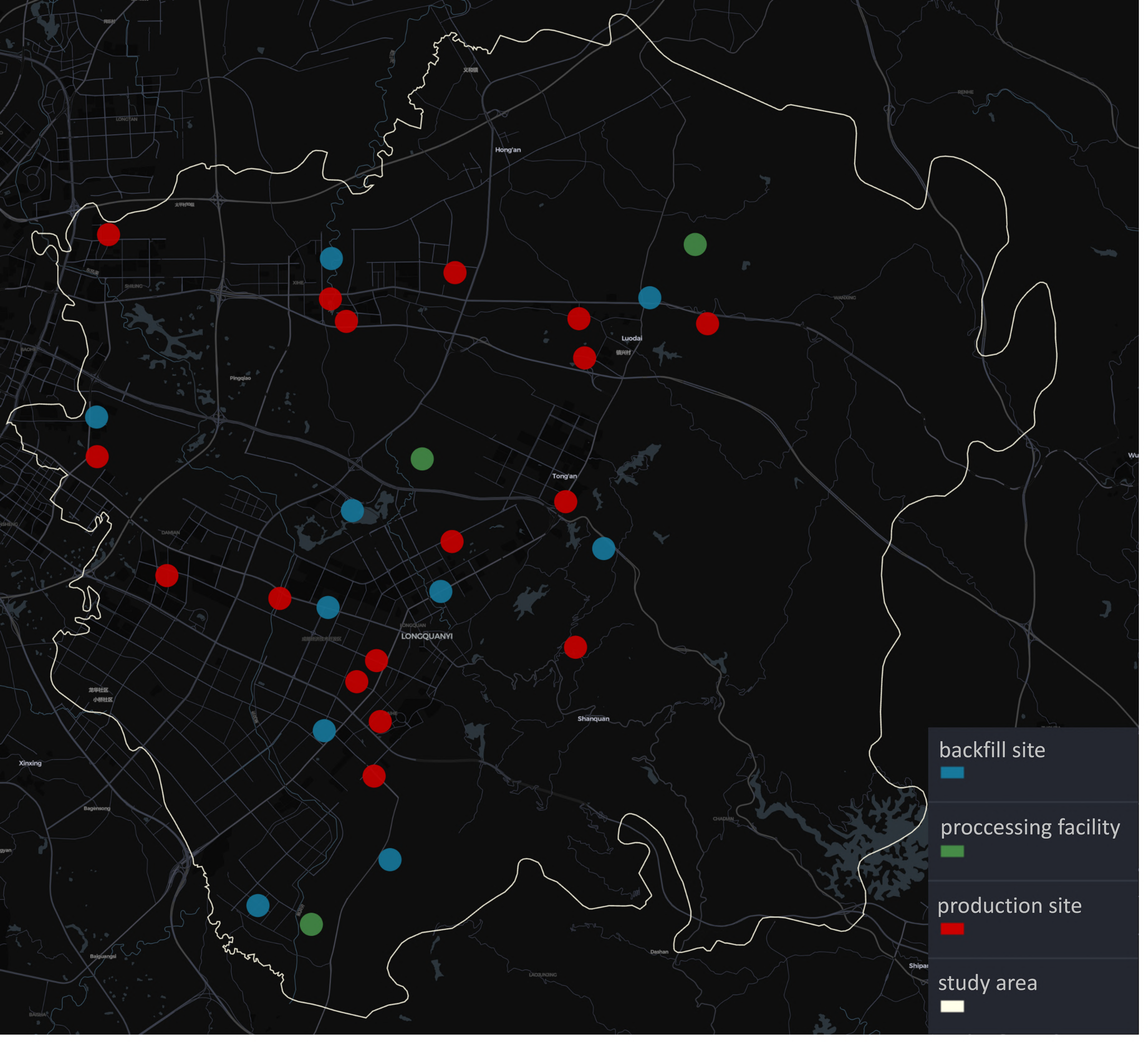}
			\caption{\label{Fig:opreation_area}The location of sites.}
		\end{figure}
		
		\subsection{Computational performance}\label{Sec:Cp}
		
		According to the parameter settings in Section~\ref{Sec:parameter_set}, the lower level model $\bf{[M1]}$ contains 77,556 integer variables and 5,548 constraints. We implement the proposed hybrid approach on a laptop equipped with Intel(R) Core(TM) i9-10850K CPU at 3.6 GHz and 128 GB of RAM on a Windows 11 64-bit OS. We then discuss the model's performance for government and carrier separately.
		
		\subsubsection{Optimal strategy of the carrier}
		
		We solve model \textbf{[M1]} using the Gurobi solver~\citep{gurobi}, setting a time limit of 600 seconds. To assess the model's performance, we run 10 randomized experiments, with the results illustrating the variation of the model gap over time, as shown in Fig.~\ref{Fig:RunTime}. We observe that a high-quality solution (with a gap of less than 1\%) can be obtained quickly, typically within 15 seconds. This is because the optimal solution of the original problem is close to the optimal solution of its linear relaxation. However, achieving a solution with a gap of less than 0.1\% proves to be more time-consuming.
		
		\begin{figure}[!t]
			\centering
			\includegraphics[width=0.5\textwidth]{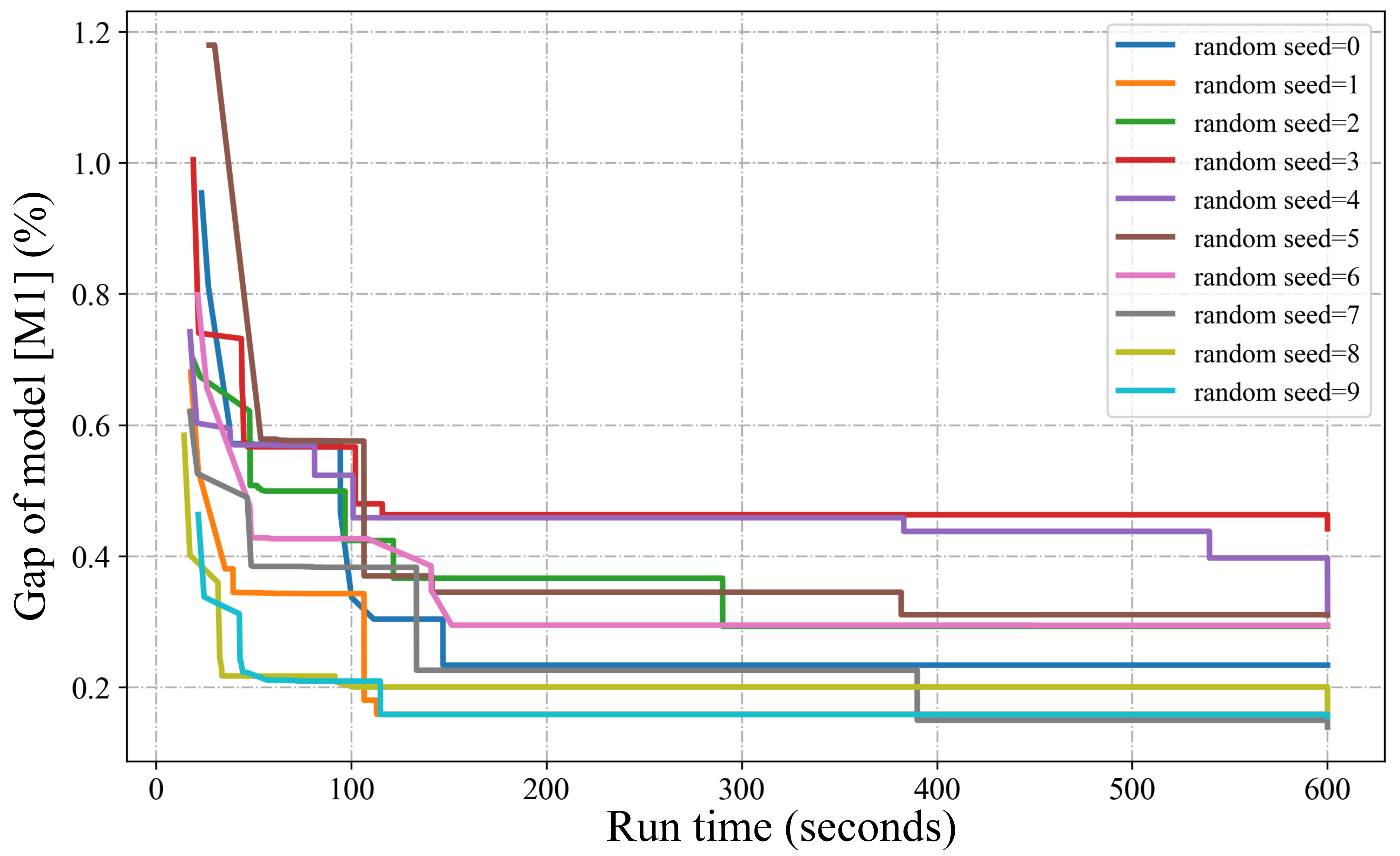}
			\caption{\label{Fig:RunTime}The variations of model gap with run time. Ten repetitions of the experiment for model \textbf{[M1]}.}
		\end{figure}
		
		In the experiment, we set the random seed to 42 and aim for a gap of less than 0.5\%. The results are presented in Table~\ref{Tab:M1_results}. In this scenario, the total amount of CW at the production sites exceeds the demand at the backfill sites. As a result, trucks are tasked with transporting waste from the production sites to the processing facilities and backfill sites, but not from the processing facilities to the backfill sites. A total of 105 trucks are in operation, consisting of 1 electric truck and 104 diesel trucks. As mentioned in Section~\ref{Sec:Problem_f}, the carrier prefers using diesel trucks to minimize costs. The electric truck is only deployed when the diesel trucks reach full capacity.
		
		\begin{table}[h!]
			\centering
			\caption{Results of model \textbf{[M1]}}\label{Tab:M1_results}
			\begin{tabular}{l r}
				\toprule
				Parameters & Values \\
				\midrule
				Objective value: negative value of the carrier's profit (CNY) & -230,242.50 \\
				Gap & 0.37\% \\
				Runtime (second) & 45 \\
				Revenue of government (CNY) & 35,375.00 \\
				Pollution index & 11,723.23 \\
				Number of electrical trucks & 1 \\
				Number of diesel trucks & 104 \\
				Transport volume of electrical trucks (tonne) & 70 \\
				Transport volume of diesel trucks (tonne) & 15,570 \\
				Transport volume from $\mathcal{S}$ to $\mathcal{P}$ (tonne) & 7,075 \\
				Transport volume from $\mathcal{S}$ to $\mathcal{D}$ (tonne) & 8,565 \\
				Transport volume from $\mathcal{P}$ to $\mathcal{D}$ (tonne) & 0 \\
				Total volume of CW (tonne) & 15,640 \\
				\bottomrule
			\end{tabular}
		\end{table}
		
		Although the ideal method for evaluating the model's performance would involve comparing its results to current practices, complete real-world data is not available. Based on the analysis in Section~\ref{Sec:parameter_set}, we estimate that the carrier requires 240 trucks to complete the transportation task. In contrast, the proposed approach only requires 105 trucks. This suggests that our method enhances transportation efficiency by 2 to 3 times.
		
		\subsubsection{Optimal strategy of the government}
		
		\begin{figure}[!t]
			\centering
			\includegraphics[width=0.5\textwidth]{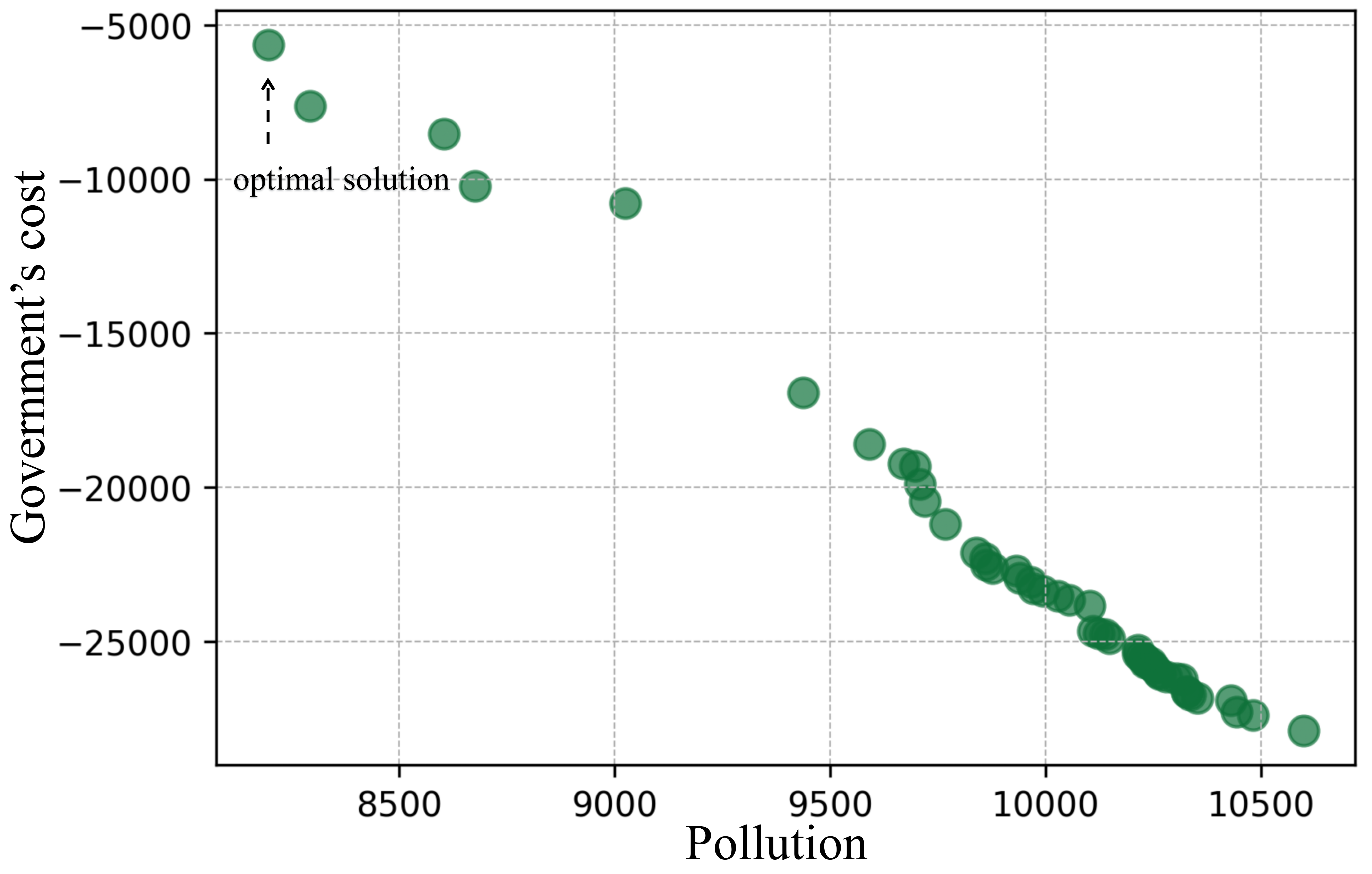}
			\caption{\label{Fig:M3_iteration}The Pareto optimal solution set of \textbf{[M3]}.}
		\end{figure}
		
		\begin{table}[h!]
			\centering
			\caption{Results of model \textbf{[M2-HPR]}}
			\label{Tab:M2HPR_results}
			\begin{tabular}{l r}
				\toprule
				Parameters & Values \\
				\midrule
				Objective value: pollution index & 8,142.39 \\
				Gap & 0.43\% \\
				Runtime (second) & 58 \\
				Profit of carrier (CNY) & 160,994 \\
				Revenue of government (CNY) & 35,175 \\
				Number of electrical trucks & 30 \\
				Number of diesel trucks & 210 \\
				Transport volume of electrical trucks (tonne) & 2,790 \\
				Transport volume of diesel trucks (tonne) & 12,825 \\
				Transport volume from $\mathcal{S}$ to $\mathcal{P}$ & 7,035 \\
				Transport volume from $\mathcal{S}$ to $\mathcal{D}$ & 8,580 \\
				Transport volume from $\mathcal{P}$ to $\mathcal{D}$ & 0 \\
				Total volume of CW (tonne) & 15,615 \\
				\bottomrule
			\end{tabular}
		\end{table}
		
		To assess the solution quality for model \textbf{[M3]}, we begin by solving model \textbf{[M2-HPR]} using the Gurobi solver. The solution of the high-point model represents the system's performance if all truck schedules are coordinated by the government. Under identical transportation tasks, the results in Table~\ref{Tab:M2HPR_results} show a 30.54\% reduction in the pollution index ($\frac{11,723.23-8,142.39}{11,723.23}\times 100\%=30.54\%$) in the HPR compared to model \textbf{[M1]}. However, this improvement in pollution control comes at a significant cost: the carrier's profit decreases substantially, from 230,242.50 CNY to 160,994.00 CNY.
		
		We then apply the proposed method to solve model \textbf{[M3]}. In each run, we solve 800 integer programming subproblems (40 particles $\times$ 20 iterations). We set a GAP of 1\% for each subproblem, and the total running time for the proposed method is 13,536 seconds (approximately 3.76 hours). Given that each planning period spans 10 hours, this solution time is acceptable. Fig.~\ref{Fig:M3_iteration} illustrates the Pareto solution set of model \textbf{[M3]}. An interesting ``jump'' phenomenon is observed, where few feasible and Pareto optimal solutions for the pollution index lie in the interval $[8500,9500]$. This occurs when the subsidy amount is insufficient to cover the additional cost of using electric trucks, leading the carrier to favor diesel trucks. However, once the subsidy surpasses this threshold, electric trucks become more attractive to the carrier than diesel trucks.
		
		The high-quality solution of model \textbf{[M3]} is the Pareto optimal solution with the minimum pollution index in Fig.~\ref{Fig:M3_iteration}. The results are shown in Table~\ref{Tab:M3}. The pollution index (primary objective function value) is 8,265.56, so $GAP\_F_1 = 1.51\%$ ($\frac{8,265.56-8,142.39}{8,142.39}\times 100\%=1.51\%$). It shows that the proposed method finds a satisfactory solution for the pollution index. The cost of the government (secondary objective function value) is 6,655.17, thus the effective subsidy rate $ESR = 95.74\%$ ($\frac{|6,655.17-(-35,375.00)|-|(-232,032.87)-(-230,242.50)|}{|6,655.17-(-35,375.00)|}\times 100\%=95.74\%$), i.e., 95.74\% of the government subsidy amount is used to reduce pollution. It shows that the proposed method finds a satisfactory solution for government subsidies. Therefore, we believe that the quality of the solution found by the proposed method is high and close to the global optimal solution.
		
		\begin{table}[h!]
			\centering
			\caption{Results of model \textbf{[M3]}}\label{Tab:M3}
			\begin{tabular}{l r}
				\toprule
				Parameters & Values \\
				\midrule
				Primary objective $F_1$: pollution index & 8,265.56 \\
				Secondary objective $F_2$: cost of government (CNY) & 6655.17 \\
				Runtime (second) & 13,536 (3.76 hours) \\
				Profit of carrier (CNY) & 232,032.87 \\
				Number of electrical trucks & 30 \\
				Number of diesel trucks & 73 \\
				Transport volume of electrical trucks (tonne) & 2,760 \\
				Transport volume of diesel trucks (tonne) & 12,820 \\
				Transport volume from $\mathcal{S}$ to $\mathcal{P}$ (tonne) & 7,025 \\
				Transport volume from $\mathcal{S}$ to $\mathcal{D}$ (tonne) & 8,555 \\
				Transport volume from $\mathcal{P}$ to $\mathcal{D}$ (tonne) & 0 \\
				Total volume of CW (tonne) & 15,580 \\
				The gap of objective function $F_1$, $GAP\_{F_1}$ & 1.51\% \\
				Effective subsidy rate, $ESR$ & 95.74\% \\
				\bottomrule
			\end{tabular}
		\end{table}
		
		According to Table~\ref{Tab:M3}, the carrier dispatches all 30 electrical trucks to transport 2,760 tonnes of waste and an additional 73 diesel trucks for 12,820 tonnes. The government subsidizes 42,000 CNY on the waste transportation process, which will reduce pollution by 29.49\% ($\frac{11,723.23-8265.56}{11,723.23}\times 100\%=29.49\%$). To further explore the operation regulations of the trucks, we examine Fig.~\ref{Fig:M3_movement}. We observe some interesting phenomena: (1) Most production sites prioritize transporting waste to the nearest backfill sites. (2) When the closest site is occupied, the production site will opt for the second nearest backfill site. For instance, production site $S_1$ initially chooses backfill site $D_1$, but since $D_1$ is already assigned to handle waste from $S_2$, the waste from $S_1$ is instead shipped to $D_2$ to reduce waiting time. This scheduling approach helps reduce congestion compared to manual methods. (3) The subsidy can incentivize the carrier to use electric trucks for long-distance waste transportation. For example, transportation to $P_1$ remain unchanged with or without the subsidy, but transportation to $P_2$ and $P_3$ shift from diesel trucks to electric trucks. This change occurs because $P_2$ and $P_3$ are further from surrounding production sites, making electric trucks a more economical option. Further analysis of important parameters is presented in Section~\ref{Sec:SA}.
		
		\begin{figure}[!t]
			\centering
			\includegraphics[width=0.8\textwidth]{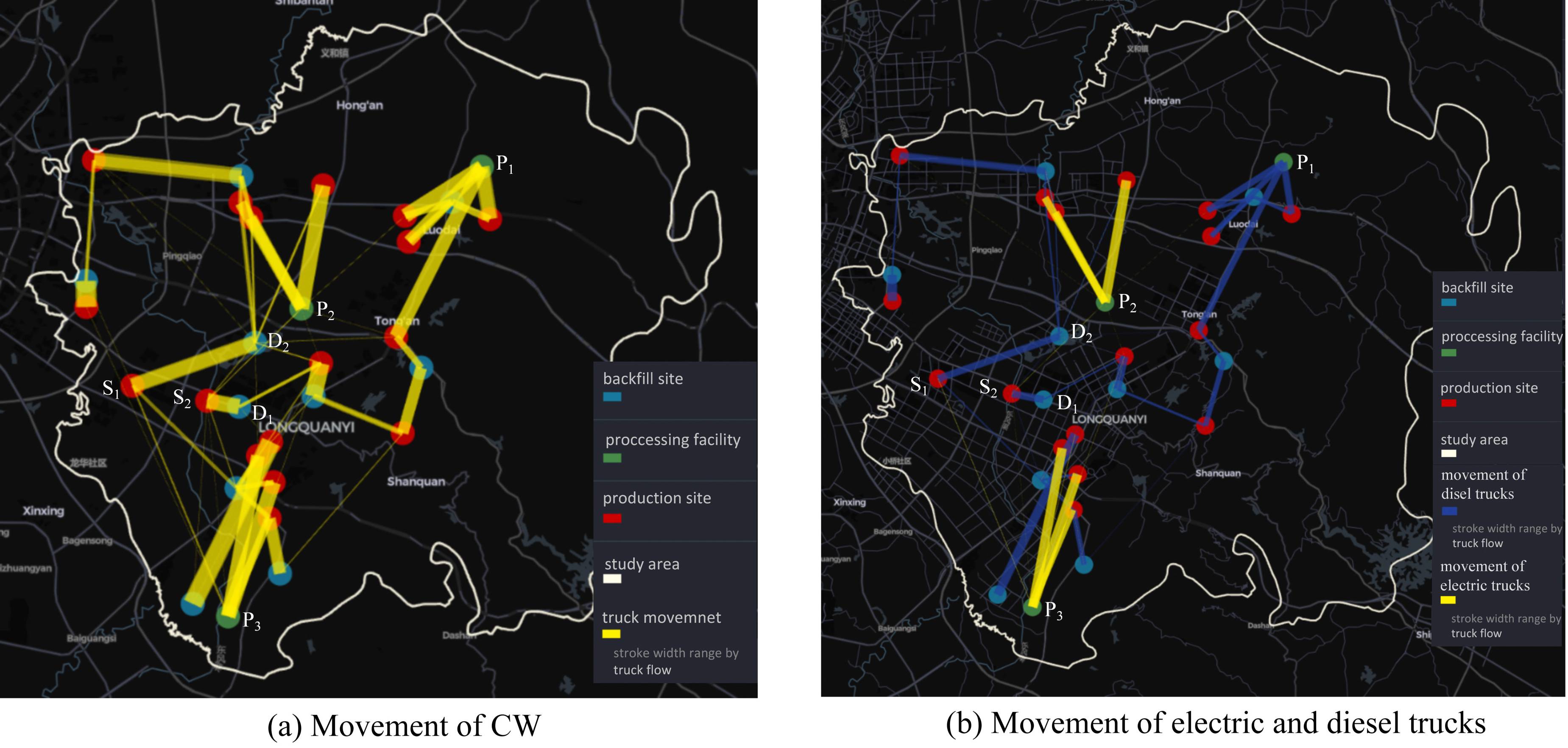}
			\caption{\label{Fig:M3_movement}Movement of CW and trucks between sites. (a) illustrates the flow of CW between each site in a planning period, where the stroke width of the arcs represents the amount of CW. (b) shows the movement of electrical and diesel trucks between each site in a planning period, where the stroke width of the arcs corresponds to the total number of trips.}
		\end{figure}
		
		\subsection{Sensitivity analysis}\label{Sec:SA}
		
		To understand the influence of the model parameters on the solution, a sensitivity analysis is conducted on the key parameters of the model, including the bounds of the treatment fee and the number of electrical trucks. 
		
		\subsubsection{Impact of the treatment fee}
		While the other parameters are fixed, we first examine the impact of the bounds on the treatment fee on the system performance. Due to the fluctuation of the treatment fee with the market and policy, its impact on environmental pollution, government subsidies, and carrier profit may change accordingly~\citep{huang2018construction}. Thus, we test the performance of the proposed model for a series of  $PU$s and $PL$s. The values of $PU$s are drawn from the range of [3,7] with a step of 0.5 and $PL=PU-10$ for each lower bound. The experimental results are shown in Fig.~\ref{Fig:treatment_fee}. The fluctuation of the pollution index is very small in the 9 experiments, which indicates that the proposed method can produce a robust solution for all $[PL,PU]$ (see Fig.~\ref{Fig:treatment_fee} (a)). Interestingly, the pollution index decreases slowly with the increase of $PU$. That’s because the carrier tends to strategically decline long distance transport tasks when the treatment fee is too high. Fig.~\ref{Fig:treatment_fee} (b) shows that the government's revenue and carrier's profit are negatively correlated. In the real world, the government can refer to Fig.~\ref{Fig:treatment_fee} (b) and flexibly set the interval $[PL, PU]$ of treatment fee according to its financial and the management situation of carriers.
		
		\begin{figure}[!t]
			\centering
			\includegraphics[width=0.8\textwidth]{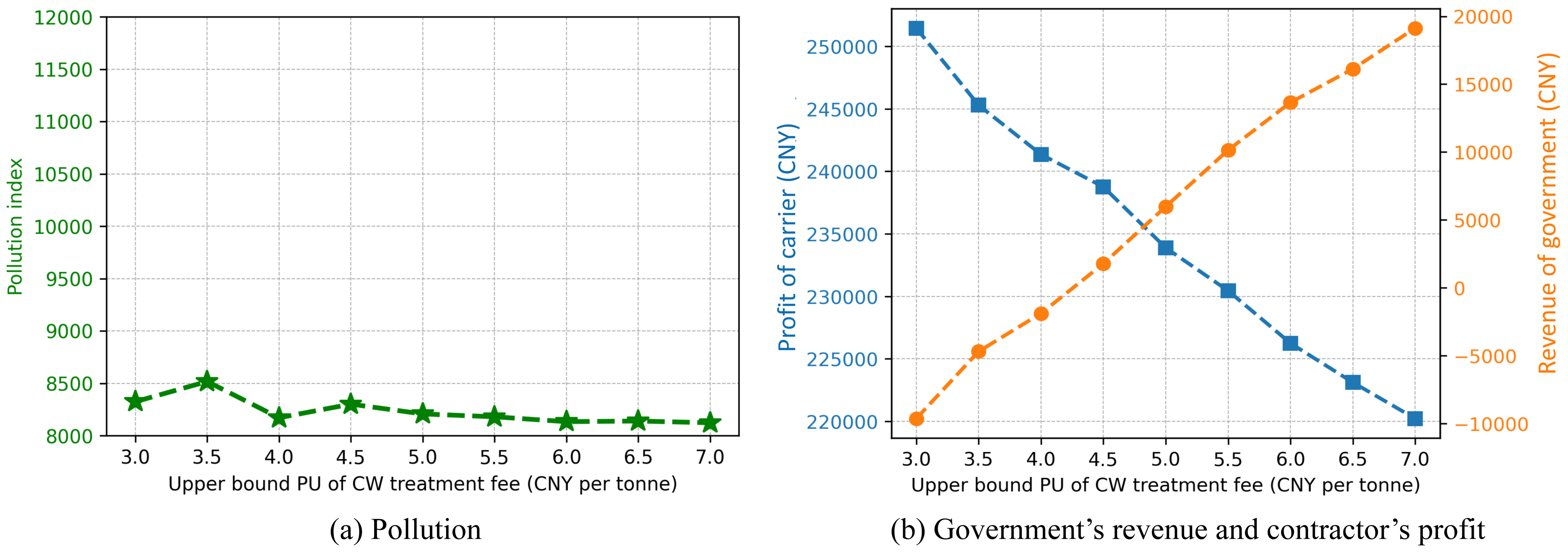}
			\caption{\label{Fig:treatment_fee}Sensitivity of the treatment fee.}
		\end{figure}
		
		\subsubsection{Impact of the number of electrical trucks}
		
		Increasing the ratio of electrical trucks to the CW transportation system could potentially reduce emissions though at the price of higher cost. Based on the parameters in Section~\ref{Sec:parameter_set}, we set the number of electrical trucks  $N_1$ to [10, 50] with a step size of 10 and evaluate its effects. 
		
		Fig.~\ref{Fig:number_trucks} demonstrates the changes in the pollution index and government revenue with different ratios of electrical trucks. Both the pollution index and government revenue show an approximately linear relation with the number of trucks. The pollution index is reduced by 1,059.73 or  9.04\% ($\frac{1059.73}{11723.23}\times100\%=9.04\%$) with a 10\% increase of electrical trucks. The government subsidy, on the other hand, increases by an average of 10,942.50 (CNY). Therefore, increasing the number of electrical trucks can significantly reduce pollution though may incur higher costs.
		
		\begin{figure}[htbp]
			\centering
			\includegraphics[width=0.5\textwidth]{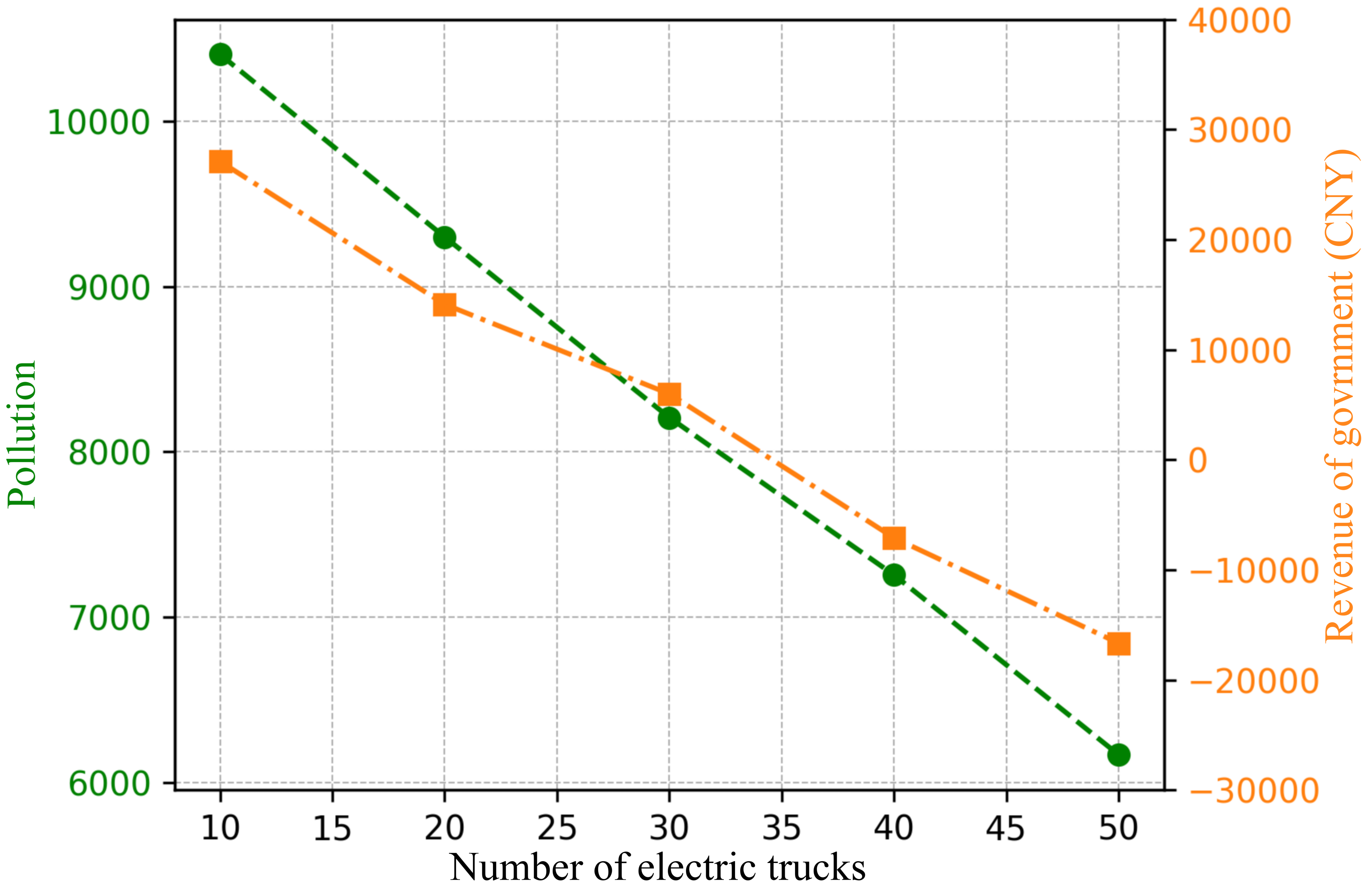}
			\caption{\label{Fig:number_trucks}Sensitivity of the number of electrical trucks.}
		\end{figure}
		
		\section{Conclusions}\label{Sec:Conclusions}

		This paper examines the operation of construction waste hauling trucks, focusing on the interaction between the government and the carrier. We first formulate a tailored multi-vehicle minimum cost flow model \textbf{[M1]} to maximize carrier profit without subsidies. Then, we explore the Stackelberg game between the government and the carrier, where the government influences the carrier’s schedule through treatment fees. A bi-level optimization model \textbf{[M3]} is developed to optimize the government’s dual objectives: minimizing pollution and subsidy expenditures,where model \textbf{[M1]} serves as the lower-level problem. To solve the challenging bi-level model, we propose a hybrid MOPSO-based approach, solving \textbf{[M1]} for each particle with a commercial solver. A large-scale case study in Chengdu shows that the hybrid method achieves a high-quality solution ($GAP\_F\_1 = 1.51\%, ESR = 95.74\%$) in a reasonable time (3.76 hours). Results indicate that appropriate subsidies can reduce pollution by 29.49\%. Insights into the truck scheduling problem are also provided.
		
		1) The proposed approach can improve truck efficiency by 2-3 times. Without subsidies, carriers prefer diesel trucks, but with subsidies, electric trucks are favored, especially for longer trips.
		
		2) The carrier's profit is positively correlated with the government subsidy. By adjusting the treatment fees strategically, the government can intervene in the carrier's transportation schedule, as shown in Fig.~\ref{Fig:treatment_fee}.
		
		3) Increasing the number of electrical trucks can significantly reduce environmental pollution, but their high cost requires government subsidies to incentivize the shift from diesel to electric trucks.
		
		The model is flexible and can be extended to accommodate more complex scenarios. For example,~\ref{Appendix:models} extends the model to include multiple types of CW. While the models show potential for emission reduction, they are limited by fixed travel speeds, which may not reflect real-world conditions under uncertainty. A potential improvement is to develop a robust optimization model that accounts for input parameter stochasticity. However, data collection may be challenging due to carriers' reluctance to share operational details, in which case a simplified model could be developed.

		\bibliographystyle{unsrtnat}
		\bibliography{ref.bib}
		\clearpage
		\appendix
			\section{Table of notations}\label{Appendix:Notation}
			
			\setcounter{table}{0}   
			\setcounter{figure}{0}
			\setcounter{equation}{0}
			\renewcommand{\thetable}{A\arabic{table}}
			\renewcommand{\thefigure}{A\arabic{figure}}
			\renewcommand{\theequation}{A\arabic{equation}}
			
			\begin{longtable}{p{0.08\textwidth} p{0.88\textwidth}}
				\caption{Summary of notation.}\\
				\toprule
				\underline{\textbf{\emph{Sets}}} & \\
				$\{0\}$ & Depot\\
				$\mathcal{P}$ & Processing facilities $\mathcal{P}=\{1,...,P\}$\\
				$\mathcal{S}$ & Production sites $\mathcal{S}=\{P+1,...,P+S\}$\\
				$\mathcal{D}$ & Backfill sites $\mathcal{D}=\{P+S+1,...,P+S+D\}$\\
				$\mathcal{T}$ & Planning period $\mathcal{T}=\{0,1,...,T\}$\\
				$\overline{\mathcal{T}}$ & Virtual planning period $\overline{\mathcal{T}}=\{-1,...,-\overline{T}\}$, $\overline{T}=\max\{r_{i,j}\}+1$\\
				$\mathcal{G}$ & Time-space network $\mathcal{G}=\{\mathcal{N},\mathcal{A}\}$, where $\mathcal{N}$ is the set of nodes and $\mathcal{A}$ is the set of arcs\\ 
				$\mathcal{A}^f$, $\mathcal{A}^d$ & Set of fully loaded arcs, set of deadheading arcs\\
				$\mathcal{A}^s$, $\mathcal{A}^0$ & Set of service arcs, set of zero arcs\\
				$\mathcal{F}_e$ & Electrical truck fleets $\mathcal{F}_e=\{\mathcal{V}_1,...,\mathcal{V}_E\}$\\
				$\mathcal{F}_d$ & Diesel truck fleets $\mathcal{F}_d=\{\mathcal{V}_{E+1},...,\mathcal{V}_{E+\overline{D}}\},E+\overline{D}=V$\\
				\underline{\textbf{\emph{Parameters}}} & \\
				$N_v$ & Total number of trucks in feet $v\in\mathcal{F}_e\cup\mathcal{F}_d$ (veh)\\
				$Q_v$ & Rated load weight per truck in fleet $v\in\mathcal{F}_e\cup\mathcal{F}_d$ (kilogram/veh)\\
				$\overline{Q}_v$ & Unloaded weight per truck in fleet $v\in\mathcal{F}_e\cup\mathcal{F}_d$ (kilogram/veh)\\
				$M_v$ & Total weight per truck in the feet $v\in\mathcal{F}_d$, $M_v=Q_v+\overline{Q}_v$ (kilogram/veh)\\
				$qs_i$ & Total weight of CW at production site $i\in\mathcal{S}$ during the planning period (tonne)\\
				$qd_j$ & Total weight of CW at backfill site $j\in\mathcal{D}$ during the planning period (tonne)\\
				$\Delta t$ & Time interval\\
				$r_{i,j}$ & Trucks take $r_{i,j}$ time intervals to travel from site $i$ to area $j$ for $i,j\in\{0\}\cup\mathcal{P}\cup\mathcal{S}\cup\mathcal{D}$\\
				$B_j$ & Maximum number of trucks allowed to be serviced during the time interval $\Delta t$ in site $j\in\mathcal{P}\cup\mathcal{S}\cup\mathcal{D}$\\
				$C_{0,v}$ & Fixed cost per truck in fleet $v\in\mathcal{F}_e\cup\mathcal{F}_d$ (CNY/veh)\\
				$C_{1,v}$ & Average cost of driving a time interval $\Delta t$ with unloaded and fully loaded trucks in fleet $v\in\mathcal{F}_e\cup\mathcal{F}_d$ (CNY)\\
				$C_2$ & Price of transporting a tonne CW (CNY/tonne)\\
				$y^{\prime}$ & Market guide price of treatment fee (CNY/tonne)\\
				$d_{i,j}$ & Distance between site $i\in\{0\}\cup\mathcal{P}\cup\mathcal{S}\cup\mathcal{D}$ and $j\in\{0\}\cup\mathcal{P}\cup\mathcal{S}\cup\mathcal{D}$ (meter)\\
				$u$ & Speed of trucks (meter/second)\\
				$PU$ & Upper bound of treatment fee(CNY/tonne) \\
				$PL$ & Lower bound of treatment fee (CNY/tonne) \\
				$\xi$ & Fuel-to-air mass ratio \\
				$k$ & Engine friction factor (kilojoule/revolution/liter)\\
				$N$ & Engine speed (revolution/second)\\
				$V$ & Engine displacement (liter)\\
				$\kappa$ & Heating value of a typical diesel fuel (kilojoule/gram)\\
				$\eta$ & Efficiency parameter for diesel engines\\
				$\eta_{t}$ & Drive train efficiency of trucks\\
				$\tau$ & Acceleration of trucks (meter/square second)\\
				$g$ & Gravitational acceleration (meter/square second)\\
				$\delta$ & Road angle (degree)\\
				$f_a$ & Coefficient of aerodynamic drag\\
				\multicolumn{2}{r}{\textit{(continued on next page)}} \\ 
				\bottomrule
			\end{longtable}
			
			\begin{longtable}{p{0.08\textwidth} p{0.88\textwidth}}
				\toprule
				\multicolumn{2}{l}{\textit{(continued)}} \\ 
				$f_r$ & Coefficient of rolling resistance\\
				$\varphi$ & Air density (kilogram/square meter)\\
				$A$ & Frontal surface area of diesel CHTH (square meter)\\
				$\theta$ & Conversion factor of fuel from gram/second to liter/second\\
				$h_j$ & Pollution factor, $j\in\mathcal{P}$\\
				$\lambda,\gamma,\alpha,\beta$ & $\lambda=\frac{\xi}{\kappa\theta}$, $\gamma=\frac{1}{1,000\eta_t\eta}$, $\alpha=\tau+g\sin\delta+gf_r\cos\delta$, $\beta=0.5f_{a}\varphi A$\\
				\underline{\textbf{\emph{Decision variables}}} & \\ 
				$x_{i,j,v,t}$ & Total flow of feet $v\in\mathcal{F}_e\cup\mathcal{F}_d$ from $i\in\{0\}\cup\mathcal{P}\cup\mathcal{S}\cup\mathcal{D}$ to $j\in\{0\}\cup\mathcal{P}\cup\mathcal{S}\cup\mathcal{D}$ at time $t\in\mathcal{T}\cup\overline{\mathcal{T}}$ (veh)\\ 
				$y_{j,v}$ & Treatment fee of transporting CW via fleet $v\in\mathcal{F}_e\cup\mathcal{F}_d$ to processing facility $j\in\mathcal{P}$ for harmless treatment (CNY/tonne) \\
				\bottomrule
			\end{longtable}
			
			\section{Table of partial parameter values}\label{Appendix:parameters}
			
			\setcounter{table}{0}   
			\setcounter{figure}{0}
			\setcounter{equation}{0}
			\renewcommand{\thetable}{B\arabic{table}}
			\renewcommand{\thefigure}{B\arabic{figure}}
			\renewcommand{\theequation}{B\arabic{equation}}
			\begin{table}[htb]
				\centering
				\caption{Partial parameter values.}
				\label{Tab:parameter_value}
				\begin{tabular}{l l | l l}
					\hline
					Parameter & Value & Parameter & Value\\
					\hline
					$T$ & 60 & $\Delta t$ & 10 minutes \\
					$u$ & 30 kilometers/hours & $C_{0,v}$ & 750 CNY, $v\in\mathcal{F}_d$; 550 CNY, $v\in\mathcal{F}_e$\\
					$C_{1,v}$ & 19.5 CNY, $v\in\mathcal{F}_d$; 9.5 CNY, $v\in\mathcal{F}_e$ & $C_2$ & 25 CNY\\
					$y^{\prime}$ & 5 CNY & $\theta$ & 737\\
					$\overline{Q}_v$ & 15,500 kilogram, $v\in\mathcal{F}_d$ & $M_v$ & 31,000 kilogram, $v\in\mathcal{F}_d$\\
					$\xi$ & 1 & $k$ & 0.2 kilojoule/revolution/liter\\
					$N$ & 32 revolution/second & $V$ & 12.54 liter \\
					$\kappa$ & 44 kilojoule/gram & $\eta$ & 0.9\\
					$\eta_{t}$ & 0.4 & $\tau$ & 0\\
					$g$ & 9.81 meter/square second & $\delta$ & 0\\
					$f_a$ & 0.7 & $f_r$ & 0.01\\
					$\varphi$ & 1.2041 kilogram/square meter & $A$ & 8.9 square meter\\
					\hline
				\end{tabular}
			\end{table}
			
			\section{Models considering multiple CW types}\label{Appendix:models}
			\setcounter{table}{0}   
			\setcounter{figure}{0}
			\setcounter{equation}{0}
			\renewcommand{\thetable}{C\arabic{table}}
			\renewcommand{\thefigure}{C\arabic{figure}}
			\renewcommand{\theequation}{C\arabic{equation}}
			
			Here we consider one extension of the model when CW can be categorized as multiple types such as inert waste, non-inert non-hazardous waste, and hazardous waste~\citep{chen2024construction}. Different types of waste have different recycling methods and treatment fees. Specifically, we define $\mathcal{C}$ as the set of all types of CW, and $c\in\mathcal{C}$ is a certain type of CW, and $c = 0$ represents that the trucks run empty. We redefine the upper decision variable $y_{j,v}$ and the lower decision variable $x_{i,j,v,t}$ as $y_{j,v,c}$, $x_{i,j,v,t,c}$, respectively,  $i,j\in\{0\}\cup\mathcal{P}\cup\mathcal{S}\cup\mathcal{D},v\in\mathcal{F}_e\cup\mathcal{F}_d,t\in\mathcal{T}\cup\overline{\mathcal{T}},c\in\mathcal{C}$. We define the models that consider multiple types of CW as model~\textbf{[M4]} and model~\textbf{[M5]}. Models~\textbf{[M4]} and~\textbf{[M5]} are consistent with the solution approach for models ~\textbf{[M2]} and ~\textbf{[M3]}, respectively.
			
			\allowdisplaybreaks[3]
			\begin{align}
				&\text{\bf{[M4]}}\notag\\ &\text{Min }
				\begin{aligned}[t]
					F_{1}^{\prime}(\mathbf{x}^{\prime}|\mathbf{y})=& \sum_{c\in\mathcal{C}}\sum_{i\in\mathcal{P}\cup\mathcal{S}\cup\mathcal{D}}\sum_{j\in\mathcal{P}\cup\mathcal{S}\cup\mathcal{D}}\sum_{v\in\mathcal{F}_d}\sum_{t\in\mathcal{T}}kNV\lambda d_{i,j}x_{i,j,v,t,c}^{\prime}/u \\
					& +\sum_{c\in\mathcal{C}}\sum_{i\in\mathcal{P}\cup\mathcal{S}\cup\mathcal{D}}\sum_{j\in\mathcal{P}\cup\mathcal{S}\cup\mathcal{D}}\sum_{v\in\mathcal{F}_d}\sum_{t\in\mathcal{T}}\overline{Q}_v\gamma\lambda\alpha_{i,j}d_{i,j}x_{i,j,v,t,c}^{\prime} \\
					& +\sum_{c\in\mathcal{C}}\sum_{i\in\mathcal{S}}\sum_{j\in\mathcal{P}\cup\mathcal{D}}\sum_{v\in\mathcal{F}_d}\sum_{t\in\mathcal{T}}Q_v\gamma\lambda\alpha_{i,j}d_{i,j}x_{i,j,v,t,c}^{\prime}\\
					&
					+\sum_{c\in\mathcal{C}}\sum_{i\in\mathcal{P}}\sum_{j\in\mathcal{D}}\sum_{v\in\mathcal{F}_d}\sum_{t\in\mathcal{T}}Q_v\gamma\lambda\alpha_{i,j}d_{i,j}x_{i,j,v,t,c}^{\prime} \\
					& +\sum_{c\in\mathcal{C}}\sum_{i\in\mathcal{P}\cup\mathcal{S}\cup\mathcal{D}}\sum_{j\in\mathcal{P}\cup\mathcal{S}\cup\mathcal{D}}\sum_{v\in\mathcal{F}_d}\sum_{t\in\mathcal{T}}\beta\gamma\lambda d_{i,j}x_{i,j,v,t,c}^{\prime}u^2 \\
					& +\sum_{c\in\mathcal{C}}\sum_{i\in\mathcal{S}}\sum_{j\in\mathcal{P}}\sum_{v\in\mathcal{F}_e\cup\mathcal{F}_d}\sum_{t\in\mathcal{T}}h_jQ_vx_{i,j,v,t,c}^{\prime};
				\end{aligned}\label{M4:obj} \\
				&\text{Subject to:} \notag \\
				&\qquad y_{j,v,c}\geq PL,\forall j\in\mathcal{P},\forall v\in\mathcal{F}_e\cup\mathcal{F}_d,\forall c\in\mathcal{C}; \label{M4:yPL}\\
				&\qquad y_{j,v,c}\leq PU,\forall j\in\mathcal{P},\forall v\in\mathcal{F}_e\cup\mathcal{F}_d,\forall c\in\mathcal{C}; \label{M4:yPU}\\
				\text{where} \notag \\
				&\begin{aligned}
					\mathbf{x}^{\prime}\in \arg\min\{&f^{\prime\prime}(\mathbf{x},\mathbf{y})=\sum_{v\in\mathcal{F}_{e}\cup\mathcal{F}_{d}}\sum_{j\in\mathcal{P}\cup\mathcal{S}\cup D}\sum_{t\in\mathcal{T}}C_{0,v}x_{0,j,v,t,0} \\
					&+\sum_{c\in\mathcal{C}}\sum_{v\in\mathcal{F}_{e}\cup\mathcal{F}_{d}}\sum_{i\in\mathcal{P}\cup\mathcal{S}\cup\mathcal{D}}\sum_{j\in\mathcal{P}\cup\mathcal{S}\cup\mathcal{D}}\sum_{t\in\mathcal{T}}C_{1,v}r_{i,j}x_{i,j,v,t,c} \\
					&-\sum_{c\in\mathcal{C}}\sum_{v\in\mathcal{F}_{e}\cup\mathcal{F}_{d}}\sum_{i\in\mathcal{S}\cup\mathcal{P}}\sum_{j\in\mathcal{D}}\sum_{t\in\mathcal{T}}C_{2}Q_{v}x_{i,j,v,t,c}\\
					&-\sum_{c\in\mathcal{C}}\sum_{v\in\mathcal{F}_{e}\cup\mathcal{F}_{d}}\sum_{i\in\mathcal{S}}\sum_{j\in\mathcal{P}}\sum_{t\in\mathcal{T}}(C_{2}-y_{j,v,c})Q_{v}x_{i,j,v,t,c}\};
				\end{aligned}\label{M4:C4}\\
				&\text{Subject to:} \notag\\
				& \qquad \sum_{j\in\mathcal{P}\cup\mathcal{S}\cup\mathcal{D}}\sum_{t\in\mathcal{T}}x_{0,j,v,t,0}\leq N_v,\forall v\in\mathcal{F}_e\cup\mathcal{F}_d; \label{M4:C5}\\
				& \qquad \sum_{j\in\mathcal{P}\cup\mathcal{S}\cup\mathcal{D}}\sum_{t\in\mathcal{T}}x_{0,j,v,t,0}-\sum_{i\in\mathcal{P}\cup\mathcal{S}\cup\mathcal{D}}\sum_{t\in\mathcal{T}}x_{i,0,v,t,0}=0,\forall v\in\mathcal{F}_e\cup\mathcal{F}_d; \label{M4:C6}\\
				& \qquad \sum_{c\in\mathcal{C}}\sum_{i\in\{0\}\cup \mathcal{P}\cup \mathcal{S}\cup \mathcal{D}}x_{i,j,v,t-r_{i,j}-1,c}-\sum_{c\in\mathcal{C}}\sum_{i\in\{0\}\cup \mathcal{P}\cup \mathcal{S}\cup \mathcal{D}}x_{j,i,v,t,c}=0,\notag\\
				& \qquad \forall j\in\mathcal{P}\cup\mathcal{S}\cup\mathcal{D},\forall v\in\mathcal{F}_e\cup\mathcal{F}_d,\forall t\in\mathcal{T}; \label{M4:C7}\\
				& \qquad \sum_{c\in\mathcal{C}}\sum_{i\in\{0\}\cup\mathcal{P}\cup\mathcal{S}\cup\mathcal{D}}\sum_{v\in\mathcal{F}_e\cup\mathcal{F}_d}x_{i,j,v,t-r_{i,j},c}\leq B_j,\forall j\in\mathcal{P}\cup\mathcal{S}\cup\mathcal{D},\forall t\in\mathcal{T}; \label{M4:C8}\\
				& \qquad \sum_{c\in\mathcal{C}}\sum_{j\in\mathcal{P}\cup\mathcal{D}}\sum_{v\in\mathcal{F}_e\cup\mathcal{F}_d}\sum_{t\in\mathcal{T}}Q_vx_{i,j,v,t,c}/1,000 \geq qs_i,\forall i\in\mathcal{S}; \label{M4:C9}\\
				& \qquad \sum_{c\in\mathcal{C}}\sum_{j\in\mathcal{P}\cup\mathcal{D}}\sum_{v\in\mathcal{F}_e\cup\mathcal{F}_d}\sum_{t\in\mathcal{T}}Q_vx_{i,j,v,t,c}/1,000 < (1+\epsilon_2)qs_i,\forall i\in\mathcal{S};\\
				& \qquad
				\sum_{c\in\mathcal{C}}\sum_{i\in\mathcal{P}\cup\mathcal{S}}\sum_{v\in\mathcal{F}_e\cup\mathcal{F}_d}\sum_{t\in\mathcal{T}}Q_vx_{i,j,v,t,c}/1,000 \geq qd_j,\forall j\in\mathcal{D}; \label{M4:C10}\\
				& \qquad
				\sum_{c\in\mathcal{C}}\sum_{i\in\mathcal{P}\cup\mathcal{S}}\sum_{v\in\mathcal{F}_e\cup\mathcal{F}_d}\sum_{t\in\mathcal{T}}Q_vx_{i,j,v,t,c}/1,000 < (1+\epsilon_2)qd_j,\forall j\in\mathcal{D};\\
				& \qquad x_{i,j,v,t,c}=0,\forall i,j\in\{0\}\cup\mathcal{P}\cup\mathcal{S}\cup\mathcal{D},\forall v\in\mathcal{F}_e\cup\mathcal{F}_d,\forall t\in\overline{\mathcal{T}},\forall c\in\mathcal{C} \label{M4:C11};\\
				& \qquad x_{i,j,v,t,c}=0, \exists\mathcal{H}\in\{\{0\},\mathcal{P},\mathcal{S},\mathcal{D}\},\forall i,j\in \mathcal{H},\forall v\in\mathcal{F}_e\cup\mathcal{F}_d,\forall t\in\mathcal{T},\forall c\in\mathcal{C}; \label{M4:C12}\\
				& \qquad x_{0,j,v,t,c}=0,\forall j\in\{0\}\cup\mathcal{P}\cup S\cup D,\forall v\in\mathcal{F}_e\cup\mathcal{F}_d,\forall t\in \mathcal{T},\forall c\in\mathcal{C}\backslash\{0\}; \label{M4:C13}\\
				& \qquad x_{i,0,v,t,c}=0,\forall i\in\{0\}\cup\mathcal{P}\cup S\cup D,\forall v\in\mathcal{F}_e\cup\mathcal{F}_d,\forall t\in \mathcal{T},\forall c\in\mathcal{C}\backslash\{0\}; \label{M4:C14}\\
				& \qquad \sum_{c\in\mathcal{C}} x_{i,j,v,t,c}\in\{0,1,...,B_j\},\forall i,j\in\{0\}\cup\mathcal{P}\cup S\cup D,\forall v\in\mathcal{F}_e\cup\mathcal{F}_d,\forall t\in \mathcal{T}\cup\overline{\mathcal{T}}.\label{M4:C15}
			\end{align}
			
			\begin{align}
				&\text{\bf{[M5]}}\notag\\ &\text{Min }
				\begin{aligned}[t]
					-\sum_{c\in\mathcal{C}}\sum_{i\in \mathcal{S}}\sum_{j\in\mathcal{P}}\sum_{v\in\mathcal{F}_e\cup\mathcal{F}_d}\sum_{t\in\mathcal{T}}y_{j,v,c}Q_vx_{i,j,v,t,c}^{\prime};
				\end{aligned}\label{M5:obj} \\
				&\text{Subject to:} \notag \\
				&\qquad F_{1}^{\prime}(\mathbf{x}^{\prime}|\mathbf{y})=F_{1}^{\prime}(\mathbf{x}^*_{M4}); \label{M5:C2}\\
				&\qquad \text{Constraints}~\eqref{M4:yPL}-\eqref{M4:yPU}; \notag \\
				\text{where} \notag \\
				&\begin{aligned}
					\mathbf{x}^{\prime}\in \arg\min\{f^{\prime\prime}(\mathbf{x},\mathbf{y})\};
				\end{aligned}\label{M5:LL_obj}\\
				&\text{Subject to:} \notag\\
				&\qquad \text{Constraints~\eqref{M4:C5}-\eqref{M4:C15}.} \notag
			\end{align}
			The solution approach of this model could be barely solved by the same routine as presented in Section~\ref{Sec:Ma}. 		
	\end{document}